\documentclass[a4paper, 11pt]{article}
\usepackage[sort&compress]{natbib}
\usepackage[margin=1.0in]{geometry}


\usepackage[utf8x]{inputenc}
\usepackage[english]{babel}
\usepackage{graphicx, amsfonts,amsmath,amssymb,amsthm}
\usepackage{commands,tikzSettings}
\usepackage{longtable}

\setcitestyle{authoryear,open={(},close={)}} 

\author{Victor Cohen$^1$, Axel Parmentier$^2$}
\title{Future memories are not needed for large classes of POMDPs}
\date{%
	$^1$Swiss Data Science Center (SDSC), ETH Zürich, 8006 Zürich, Switzerland\\%
    $^2$CERMICS, \'Ecole des Ponts, Marne-la-Vall\'ee, France\\%
    \today}
\begin{document}
\maketitle

\abstract{
    Optimal policies for partially observed Markov decision processes (POMDPs) are history-dependent: Decisions are made based on the entire history of observation.
    Memoryless policies, which take decisions based on the last observation only, are generally considered useless in the literature because we can construct POMDP instances for which optimal memoryless policies are arbitrarily worse than history-dependent ones.
    Our purpose is to challenge this belief.
    We show that optimal memoryless policies can be computed efficiently using mixed integer linear programming (MILP), 
    and perform reasonably well on a wide range of instances from the literature.
    When strengthened with valid inequalities, the linear relaxation of this MILP provides high quality upper-bounds on the value of an optimal history dependent policy.
    Furthermore, when used with a finite horizon POMDP problem with memoryless policies as rolling optimization problem, a model predictive control approach
    leads to an efficient history-dependent policy, which we call the short memory in the future (SMF) policy. Basically, the SMF policy leverages these memoryless policies to build an approximation of the Bellman value function.
    Numerical experiments show the efficiency of our approach on benchmark instances from the literature.
}



\section{Introduction}
\label{sec:intro}

Decision makers often have to control a system they do not fully observe.
\emph{Partially observed Markov decision processes }(POMDP) enable to model such problems.
They can be described as follows.
At each time $t$ in $\{1,\ldots,n\}$ with $n$ in $\bbZ_+$ or $n=+\infty$, the system is in a state $s \in \calS$.
The decision maker does not observe $s$, but has access to a noisy observation $o \in \calO$ emitted with a known probability $p(o|s)$.
Based on the information available, the decision maker takes a decision $a$ in $\calA$. The decision maker perceives a reward $r(s,a)$ and the system then evolves toward the next state $s'$ with probability $p(s'|s,a)$.
In this paper, we suppose that $\calS$, $\calO$, and $\calA$ are finite.
A \emph{policy} $\bfdelta$ models how decisions are taken.
The decision maker has access to all the information available: At time $t$, a policy is given by the conditional probability $\delta_t(a_t|h_t)$ of taking decision $a_t$ at time $t$ given the history of observations and actions $h_t=(o_0,a_0,\ldots,o_t )$. Such a policy is said \emph{history-dependent} because of the decision is taken based on variable $h_t$ at every time $t$. 
The purpose of the POMDP problem is to find a history-dependent policy that maximizes the expected reward.
When the horizon $n$ is infinite, we usually consider the discounted expected reward, which gives the problem 
\begin{align}\label{pb:POMDP}
	\max_{\bfdelta} \bbE_{\bfdelta} \Bigl[ \sum_{t=0}^{+\infty} \gamma^{t} r(S_t,A_t)\Bigr]
\end{align}
where $\bbE_{\bfdelta}$ denotes the distribution on the state, observation, and action spaces implied by policy $\bfdelta$.
The optimization problem can also be defined with a finite horizon $n$ and a discount factor set to one, i.e., $\gamma=1$.
It has been shown that all the information until time $t$ is contained in the \emph{belief state}, which is the conditional distribution $\bbP(s_t|h_t)$ on the hidden state given the history variable $h_t$ \citep[Theorem 4]{Eckles1968}. The belief state is a sufficient statistic of the history of information.
An optimal policy can therefore be found as a solution of a (large dimension) Bellman equation in the belief state \citep{Sondik1978,Sondik1973}. 
Most algorithms build value functions that are approximate solutions of this Bellman equation \citep{Hauskrecht2000,Shani2013}.

Finding an optimal history-dependent policy is computationally difficult, not the least because tabulating a history-dependent policy requires exponential memory.
Indeed, the problem with finite horizon is P-SPACE hard \citep[Theorem 6]{Tsitsiklis1987} and is undecidable with discounted infinite horizon \citep[Theorem 4.3]{Madani1999}.
One way of simplifying the problem is to restrict oneself to memoryless policies.
A \emph{memoryless} policy uses exclusively observation $o_t$ to take decision $a_t$ and can be modeled using a conditional probability $\delta_t(a_t|o_t)$.
The problem of finding an optimal memoryless policy is indeed only NP-hard \citep{Littman94memoryless}.
However, an optimal memoryless policy can be arbitrarily worse than an optimal history-dependent policy (see the ``Maze'' example in \cite{Littman94memoryless}).
For this reason, memoryless policies seem to be considered worthless and have received relatively little attention in the literature \citep{Li2011,Singh98,Steckelmacher2017}.
Our purpose is to challenge this belief and to show that memoryless policies are useful as ``intermediate steps'' in the search of history-dependent policies.
\begin{enumerate}
	\item We introduce a history-dependent policy that approximates the value function by computing an optimal memoryless policy for a finite rolling horizon starting from the current belief state.
	\item We introduce a MILP formulation for the problem of finding an optimal memoryless policy for a finite horizon, and strengthen it with valid inequalities. Its linear relaxation provides a collection of upper bounds on the value of a history-dependent policy.
\end{enumerate}
To the best of our knowledge, the \texttt{POMDP.org} website \citep{Cassandra2003ASO} provides the largest open library of POMDP instances.
We extensively benchmark our algorithms on finite and infinite horizon instances from this library and reach the following conclusions. 
\begin{enumerate}[resume]
	\item Our MILP formulation provides a tractable way of computing optimal memoryless policies. We were surprised to observe that, for $60 \%$ of the instances , an optimal memoryless policy is in a $20 \%$ of gap with an optimal history dependent policy.
	\item When we use our MILP formulation to compute the optimal memoryless policies, our history-dependent policy is competitive with state-of-the-art algorithms. 
\end{enumerate}
The rest of the paper is organized as follows. 
In Section~\ref{sec:history_dependent_policy}, we formally introduce the POMDP problem, the definition of a belief state policy and the main principles of our history-dependent policy, which is based on the computation of a memoryless policy. Then, Section~\ref{sec:milp} details the mathematical program that gives the intermediate memoryless policy, as well as the theorems guaranteeing that its linear relaxation is an upper bound on the value of an optimal history-dependent policy. Finally, Section~\ref{sec:num} provides numerical experiments showing the efficiency of our approach on benchmark instances.
All the proofs of the results of this paper are available in Appendix~\ref{app:validInequalities:proofs}.

\section{History dependent policy}
\label{sec:history_dependent_policy}


We introduce the \emph{belief state space} $\calB$ as the set of probability distributions (the simplex) on $\calS$, i.e., 
$$\calB = \big\{\bfp: \calS \rightarrow [0,1],\ \text{such that } p(s) \geq 0 \text{ for all } s\in \calS, \ \text{and} \ \sum_{s\in\calS} p(s)=1  \big\}.$$
Since the history of information at time $t$ can be summarized by the belief state $\bfb_t$, a history-dependent policy is given by the probability $\delta_t(a_t|\bfb_t)$ of taking action $a_t$ given the belief $\bfb_t$ at time $t$.

\paragraph{Bellman optimality equation.}
As mentioned in the introduction, a POMDP can be seen as a MDP in the belief state space. It follows that we can write the Bellman's principle of optimality in the belief state space.
Given an optimal policy $\bfdelta^*=\left(\delta_0,\ldots \right)$ of Problem~\eqref{pb:POMDP}.
For any belief $\bfb$ in $\calB$, we denote by $v_t(\bfb)$ the expected value of the reward $\bbE_{\bfdelta^*} \left[ \sum_{t'=t}^{+\infty} \gamma^{t'-t}r(S_{t'},A_{t'}) | S_0 \sim \bfb\right]$ starting in belief $\bfb$ at time $t$.
Since policy $\bfdelta^*$ is optimal, the value function satisfies the following Bellman's equation \citep{Bellman1957} at any time $t$
\begin{equation}\label{eq:BellmanBelief}
    v_t(\bfb) = \max_{a \in A} \underbrace{\left(\sum_{s \in \calS} b(s)r(s,a) + \gamma \sum_{o \in \calO} p(o|a,\bfb) v_{t+1}(f(\bfb,a,o)) \right)}_{\calQ_t(a,\bfb)},
\end{equation}
where $f \colon \calB \times \calA \times \calO \rightarrow \calB$ is the belief state transition function, i.e., $f(\bfb,a,o)$ indicates the belief state at time $t+1$, which is defined below. 
Note that the conditional probability $p(o|a,\bfb)$ can be expressed using the Bayes formula $p(o|a,\bfb) = \sum_{s,s' \in \calS} p(o|s)p(s|s',a)b(s')$.
Given the optimal value functions $(v_t)_{t \in \{0,\ldots,n\}}$, an optimal policy $\bfdelta^* = (\delta_t)_{t \in \{0,\ldots,n\}}$ is obtained by assigning $\delta_t(a|\bfb)=1$ if action $a$ belongs to $\argmax_{a \in \calA} \calQ_t(a,\bfb)$.
In the rest of the paper, given any initial belief $\bfb$ in $\calB$ we denote by $v^*(\bfb)$ the optimal value of the POMDP starting in belief $\bfb$, i.e., $v^*(\bfb) := v_0(\bfb)$, and with the same state transition probability distribution.
And we introduce the Q-function $\calQ_t(\bfb,a)$ as the expression in the maximum operand on the right-hand side of Bellman equation~\eqref{eq:BellmanBelief}.

In the discounted infinite horizon setting, Bellman equation~\eqref{eq:BellmanBelief} can be simplified by considering \emph{stationary policies}. 
A policy $\bfdelta$ is stationary if $\bfdelta = (\delta, \delta,\ldots)$.
\citet[Theorem 6]{RossSheldonM1997Apmw} guarantees that optimal value function $v_0$ satisfies
\begin{align*}
    v^*(\bfb) = \max_{a' \in \calA} \underbrace{\left(\sum_{s \in \calS} b(s)r(s,a') + \gamma \sum_{o \in \calO} p(o|a',\bfb) v^*(f(\bfb,a',o)) \right)}_{\calQ(\bfb,a)},
\end{align*}
and we can deduce from this equation that there exists an optimal policy that is stationary.

\paragraph{Belief state update.}
The belief transition function $f$ mentioned above has an explicit formula \citep[Eq. (2.2)]{POMDPMonahan1982}. Given a belief $\bfb$ in $\calB$, an action $a$ in $\calA$ and an observation $o$ in $\calO$, we have 
\begin{align}\label{eq:belief_state_update}
    f(\bfb,a,o)(\tilde{s})= \bbP(S_{t+1} = \tilde{s}| A_t=a,O_{t+1}=o,S_t\sim\bfb)= \frac{p(o_{t+1}|\tilde{s})\sum_{s\in \calS} p(\tilde{s}|s, a_t)b(s)}{\sum_{s,s' \in \calS} p(o_{t+1}|s')p(s'|s, a_t)b(s)}
\end{align}

\paragraph{MDP approximation.}
Before introducing our algorithm, we recall the definition of the \emph{MDP approximation} \citep[Section 4.1]{Hauskrecht2000}, which corresponds to the case where the decision maker has access to the system state. In this case, the POMDP becomes a MDP and the policy is of the form $\delta(a|s)$ for any action $a$ in $\calA$ and any state $s$ in $\calS$. 
Given an initial state $s$ in $\calS$, we denote by $v_{\rmMDP}(s)$ the optimal value of a MDP starting in state $s$ over an infinite discounted horizon. It satisfies the Bellman equation \citep{Bellman1957}
\begin{align}\label{pb:BellmanBeliefMDP}
    v_{\rmMDP}(s) = \max_{a'\in \calA} \left(r(s,a') + \gamma \sum_{s' \in \calS} p(s'|s,a') v_{\rmMDP}(s') \right).
\end{align}
There are several methods to compute $v_{\rmMDP}$: linear programming, value iteration, policy iteration (see e.g.~the book of~\citet{Puterman1994} for more details on these methods).




\paragraph{Our approach.}
Computing the value function $v^*(\bfb)$ is intractable as soon as the state and observation spaces are of moderate size.
We therefore suggest using a heuristic policy. 
As mentioned in the introduction, the performances of memoryless policies as well as their lower level of complexity lead us to consider them as a tool to approximate the value function.
We therefore suggest approximating~$v^*(\bfb)$ by~$\hat v_{\rmSMF}^T(\bfb)$, which we define as the value of optimal memoryless policy starting in belief $\bfb$, with a finite horizon $T$ and a tail reward corresponding to the MDP value function, 



\begin{equation}\label{eq:memorylessFromBeliefValueFunction}
    \hat v_{\rmSMF}^T(\bfb) = \max_{a' \in \calA}\underbrace{\max
_{\bfdelta\in \Deltaml^{T}}
    \bbE_{\bfdelta}\Big[ \sum_{t=0}^{T} \gamma^{t}r(S_{t},A_{t}) + \gamma^{T+1}v_{\rmMDP}(S_{T+1}) \Big| S_0 \sim\bfb, A_0 = a\Big]
    }_{\hat Q_{\rmSMF}^T(\bfb,a'): \ \substack{\text{Optimal memoryless value of POMDP} \\ \text{with $v_{\rmMDP}$ as tail reward}}},
\end{equation}
where $\ind_a(a')$ is the indicator function equal to $1$ if $a'=a$ and $0$ otherwise, and $\Deltaml^{T'}$ is the set of memoryless policies over $T'$ time steps, which will be formally defined in the next section. 
The acronym SMF stands for short memory in the future.
Given a finite horizon $T$, we introduce the Q-function $\hat \calQ_{\rmSMF}^T$ as the expression in the maximum operand on the right-hand side of~\eqref{eq:memorylessFromBeliefValueFunction}. 
The finite horizon $T$ is also called \emph{rolling horizon} in the optimal control theory \citep{Bertsekas2005}.
One may observe that there is no dependence in $t$, i.e., the time at which the decision is taken. Indeed, since we assume that the parameters are stationary in the present model, the optimal value only depends on the initial belief state $\bfb$.

\paragraph{Our policy.}
Given a finite horizon $T$ in $\bbZ_{+}$, we define the Short Memory in the Future (SMF) policy $\bfdelta_{\mathrm{SMF(T)}}$ as follows:
\begin{align}\label{eq:history_dep_pol:policySMF}
    \delta_{\mathrm{SMF}(T)}(a|\bfb) = \begin{cases}
                    1 \ \text{if} \ a  \in  \displaystyle\argmax_{a' \in \calA} \hat Q_{\rmSMF}^T(\bfb,a') \\
                    0 \ \text{otherwise}
                    \end{cases}
\end{align}
In Definition~\eqref{eq:history_dep_pol:policySMF}, there is an abuse of notation because it defines a collection of SMF policies since there may be more than one element in $\argmax_{a' \in \calA} \hat Q_{\rmSMF}^T(\bfb,a')$. We emphasize that in this paper, we focus on deterministic policies, i.e., $\delta_{\rmSMF(T)}(a|\bfb)$ belongs to $\{0,1\}$.
It takes its decision based on all the information available on the past, but ignores the fact that decision in the future are taken based on all the available information, which is the reason why we say that our policy has a short memory for the future decisions.
Any finite horizon memoryless solver can be used to compute SMF policy. We suggest solving this memoryless problem with the MILP formulation introduced in the next section.

Given an initial belief $\bfb$ in $\calB$ and a finite horizon $T$ in $\bbZ_{+}$, we denote by $v_{\rmSMF(T)}(\bfb)$ the value of SMF policy. 
We underline that $v_{\rmSMF(T)}(\bfb)$ is not equal to $\hat v_{\rmSMF}^T(\bfb)$, the approximation of the value function used by the SMF policy. 
Since $\delta_{\rmSMF(T)}$ is a feasible policy, we have $v_{\rmSMF(T)}(\bfb) \leq v^*(\bfb)$.
The numerical experiments in Section~\ref{sec:num} show that SMF policy performs well on a large number of benchmark instances compared to a state-of-the-art POMDP solver.

\section{Mathematical programming for memoryless POMDPs}
\label{sec:milp}


The aim of this section is to explain how to compute an optimal memoryless policy for a POMDP with finite horizon, which is required for the history-dependent policy of Section~\ref{sec:history_dependent_policy}.
To do so, we focus on the optimization problem, and introduce the notation $w_{\rmml}^T(\bfb)$ for its value.
\begin{equation}\label{pb:POMDPmlfiniteHorizon}
    w_{\rmml}^T(\bfb) = \max_{a \in \calA, \bfdelta \in \Deltaml^T} \bbE_{\bfdelta} \Bigl[ \sum_{t=0}^{T} r_t(S_t,A_t) | S_0 \sim \bfb, A_0 = a\Bigr]
\end{equation}
where $\Deltaml^{T}$ denotes the set of memoryless policies over a finite horizon $T$,
\begin{align}\label{eq:problem:def_policy_set}
\displaystyle \Deltaml^T = \bigg\{ \bfdelta \in \bbR^{(T+1) \times \calA \times \calO } \colon \sum_{a \in \calA} \delta^t_{a|o} = 1 \ \mathrm{and} \ \delta^t_{a|o} \geq 0,\enskip \text{for all } o \in \calO, \enskip a \in \calA, t \in [T] \bigg\},
\end{align}
$\bfb$ in $\calB$ is an initial belief state, $a_0$ in $\calA$ is an initial action, and $r_t$ is a reward function for every $t$ in $[T]$.
In the notation $w_{\rmml}^T(\bfb)$, the subscript "ml" refers to memoryless. Similarly, we denote by $\Delta^T$ the set of history-dependent policies over a finite horizon $T$, and $w^T(\bfb)$ the optimal value of the corresponding POMDP problem with history dependent policies.
In Definition~\eqref{eq:problem:def_policy_set}, the element $\delta_{a|o}^t$ can be interpreted as the conditional probability of taking action $a$ given $o$ at time $t$.
The reward function $r_t$ is arbitrary, and not necessarily the one introduced in Section~\ref{sec:history_dependent_policy}. 
In general, the POMDP problem with finite horizon is defined with a reward function which does not depend on time. In our case, the results of this section applies as well to the most general case with time dependent reward function.

In Section~\ref{sub:milp:NLP} and Section~\ref{sub:milp:MILP}, we introduce respectively a non-linear program (NLP) and a mixed integer linear program (MILP) to solve Problem~\eqref{pb:POMDPmlfiniteHorizon}.
Then, in Section~\ref{sub:milp:valid_cuts} we introduce the valid inequalities that enable to tighten the linear relaxation of our MILP.

\subsection{An exact Nonlinear Program}
\label{sub:milp:NLP}

We introduce the following nonlinear program (NLP) with a collection of variables\\
$\bfmu = \left((\mu_s^t)_s,(\mu_{soa}^t)_{s,o,a},(\mu_{sa}^t)_{s,a}\right)_{t}$, $\bfdelta = \left( \left(\delta_{a|o}^t\right)_{a,o} \right)_{t}$.

\begin{subequations}\label{pb:milp:NLP_pomdp}
\begin{alignat}{2}
\max_{\bfmu, \bfdelta}  \enskip & \sum_{t=0}^T \sum_{\substack{s \in \calS,a \in \calA}} r_t(s,a) \mu_{sa}^t  & \quad & \label{eq:milp:NLP_obj_function}\\
\mathrm{s.t.} \enskip
 & \delta_{a|o}^0 = \delta_{a|o'}^0 & \forall o,o' \in \calO, a \in \calA \label{eq:milp:NLP_initial_policy} \\
 & \mu_{s}^{0} = \bfb(s) &  \quad  \forall s \in \calS \label{eq:milp:NLP_state_initial} \\
 & \mu_{s}^{t} = \sum_{a \in \calA} \mu_{sa}^t &  \quad  \forall s \in \calS, t \in [T] \label{eq:milp:NLP_consistency_s_t}\\
 & \mu_{sa}^t =  \sum_{o \in \calO} \mu_{soa}^t &  \quad  \forall s \in \calS, a \in \calA, t \in [T] \label{eq:milp:NLP_consistency_sa}\\
 & \mu_s^{t+1} = \sum_{s' \in \calS, a' \in \calA} p(s|s',a')\mu_{s'a'}^t  &  \quad  \forall s \in \calS, t \in [T] \label{eq:milp:NLP_consistency_s} \\
  & \mu_{soa}^t = \delta^t_{a|o} p(o|s) \mu_s^t & \quad \forall s \in \calS, o \in \calO, a \in \calA, t \in [T] \label{eq:milp:NLP_indep_action} \\
 & \bfdelta \in \Deltaml^T, \bfmu \geq 0 \label{eq:milp:NLP_constraint_policy} 
\end{alignat}
\end{subequations}

Given a policy $\bfdelta \in \Deltaml^T$, we say that $\bfmu$ is the vector of \emph{moments} of the probability distribution $\bbP_{\bfdelta}$ induced by $\bfdelta$ when
\begin{subequations}\label{eq:milp:def_moments}
	\begin{alignat}{2}
		&\mu_s^t = \bbP_{\bfdelta}(S_t=s), & & \quad \forall s \in \calS, \forall t \in [T+1]  \label{eq:milp:def_moments_s} \\
		&\mu_{soa}^t = \bbP_{\bfdelta}(S_t=s, O_t=o, A_t=a), & & \quad \forall s \in \calS, o \in \calO, a \in \calA, \forall t \in [T] \label{eq:milp:def_moments_soa}\\
		&\mu_{sa}^t = \bbP_{\bfdelta}(S_t=s, A_t=a), & & \quad \forall s \in \calS, a \in \calA, \forall t \in [T] \label{eq:milp:def_moments_sa}
	\end{alignat}
\end{subequations}
In the definition of vector of moments~\eqref{eq:milp:def_moments}, we omit the conditioning of the probability distributions on the event $\{\bfb_0 = \bfb \}$.
Thanks to the properties of probability distributions, such vector of moments~\eqref{eq:milp:def_moments} of $\bbP_{\bfdelta}$ satisfies the constraints of Problem~\eqref{pb:milp:NLP_pomdp}. Conversely, given a feasible solution of Problem~\eqref{pb:milp:NLP_pomdp}, Theorem~\ref{theo:milp:NLP_optimal_solution} ensures that $\bfmu$ is the vector of moments of $\bbP_{\bfdelta}$. 
We denote by $z_{\rmml}^{T}(\bfb)$ the optimal value of Problem~\eqref{pb:milp:NLP_pomdp}. 

\begin{restatable}{theo}{NLPoptimalSolution}\label{theo:milp:NLP_optimal_solution}
    Let $(\bfmu, \bfdelta)$ be a feasible solution of NLP~\eqref{pb:milp:NLP_pomdp}. Then $\bfmu$ is the vector of moments of the probability distribution $\bbP_{\bfdelta}$ induced by $\bfdelta$, and $(\bfmu, \bfdelta)$ is an optimal solution of NLP~\eqref{pb:milp:NLP_pomdp} if and only if $\bfdelta$ is an optimal policy of Problem~\eqref{pb:POMDPmlfiniteHorizon}. In particular, $w_{\rmml}^T(\bfb) = z_{\rmml}^{T}(\bfb)$. 
\end{restatable}

One can observe that Constraints~\eqref{eq:milp:NLP_initial_policy} enforce the initial policy $\delta^0$ to be independent from the initial observation. 
Despite the recent advances of the nonlinear solvers, numerical experiments in Section~\ref{sec:num} show that NLP solver perform poorly on benchmark instances. Indeed, either these solvers are based on heuristics \citep[\texttt{KNITRO}]{Byrd2006} which find solutions far from the optimum, or the solvers are based on exact algorithms \cite{gurobi}, which have high computational cost and take much more time.

\subsection{Turning the nonlinear program into an MILP}
\label{sub:milp:MILP}

In this section, we explain how to derive a MILP that computes $w_{\rmml}^T(\bfb)$.
We define the set of \emph{deterministic memoryless policies} $\Deltaml^{T,\rm{d}} = \Deltaml^{T} \cap \{0,1\}^{(T+1) \times \calA \times \calO}$.
The following proposition states that we can restrict our policy search in~\eqref{pb:POMDPmlfiniteHorizon} to the set of deterministic memoryless policies.
\begin{prop}{\cite[Proposition 1]{Bagnell2004}}\label{prop:milp:det_policies}
	For any belief $\bfb$ in $\calB$, there always exists an optimal policy for Problem~\eqref{pb:POMDPmlfiniteHorizon} that is deterministic, i.e.,
	\begin{equation*}\label{eq:milp:prop:det_policies}
    	\max_{a \in \calA,\bfdelta \in \Deltaml^{T}} \bbE_{\bfdelta}\Bigl[ \sum_{t=0}^{T} r_t(S_t,A_t) | \bfb, a \Bigr] = \max_{a \in \calA,\bfdelta \in \Deltaml^{T,\rm{d}}} \bbE_{\bfdelta} \Bigl[ \sum_{t=0}^{T} r_t(S_t,A_t) | \bfb, a\Bigr].
	\end{equation*}
\end{prop}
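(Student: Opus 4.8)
The plan is to exploit the fact that the expected reward is a \emph{multi-affine} function of the policy and that $\Deltaml^T$ is a Cartesian product of probability simplices whose vertices are precisely the deterministic policies in $\Deltaml^{T,\rm{d}}$. Since $\Deltaml^{T,\rm{d}}\subseteq\Deltaml^T$, the right-hand side of the claimed identity is trivially at most the left-hand side, so it suffices to show that for every $a\in\calA$ and every $\bfdelta\in\Deltaml^T$ there is a deterministic policy $\bar\bfdelta\in\Deltaml^{T,\rm{d}}$ with $\bbE_{\bar\bfdelta}\bigl[\sum_{t=0}^T r_t(S_t,A_t)\mid\bfb,a\bigr]\ge \bbE_{\bfdelta}\bigl[\sum_{t=0}^T r_t(S_t,A_t)\mid\bfb,a\bigr]$. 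Taking the supremum over $\bfdelta$ and the maximum over the finite set $\calA$ then yields the equality (and, $\Deltaml^{T,\rm{d}}$ being finite, shows that both sides are attained, so no separate compactness argument is needed).

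First I would fix $a\in\calA$ and set $J(\bfdelta):=\bbE_{\bfdelta}\bigl[\sum_{t=0}^T r_t(S_t,A_t)\mid S_0\sim\bfb,\ A_0=a\bigr]$. Writing the probability under a memoryless policy of a fixed trajectory $(s_0,o_0,a_0,\ldots,s_{T+1})$ as $\bfb(s_0)\prod_{t}p(o_t\mid s_t)\,\delta^t_{a_t\mid o_t}\,p(s_{t+1}\mid s_t,a_t)$, one sees that, for a fixed pair $(t,o)$, the coordinates $(\delta^t_{a\mid o})_{a\in\calA}$ enter this product through at most one factor and to the first power; summing these trajectory probabilities weighted by the rewards $r_{t'}(s_{t'},a_{t'})$ preserves this structure. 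Hence, for every $(t,o)$, the map $J$ is affine in the block $(\delta^t_{a\mid o})_{a\in\calA}$ whenever all other blocks are held fixed.

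Then I would round the policy block by block. The set $\Deltaml^T$ is the Cartesian product, over the pairs $(t,o)$, of copies of the probability simplex on $\calA$, and its vertices are exactly the policies in $\Deltaml^{T,\rm{d}}$. Starting from $\bfdelta$, pick a pair $(t,o)$, freeze all other blocks, and observe that $J$ is affine on the remaining simplex, so its maximum there is attained at a vertex, i.e.\ at a deterministic action choice; replacing $(\delta^t_{a\mid o})_{a\in\calA}$ by that vertex keeps the policy in $\Deltaml^T$ and does not decrease $J$. Iterating over the finitely many pairs $(t,o)$ produces $\bar\bfdelta\in\Deltaml^{T,\rm{d}}$ with $J(\bar\bfdelta)\ge J(\bfdelta)$, which is what we needed. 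The only step requiring genuine care is the factorization establishing multi-affineness; once that is in place, the rest is the standard fact that a multi-affine function on a product of simplices is maximized at a vertex, applied one coordinate block at a time.
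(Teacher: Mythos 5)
Your proof is correct. Note that the paper itself gives no proof of this proposition: it is imported as \cite[Proposition~1]{Bagnell2004}, so there is nothing internal to compare against. Your argument --- the expected reward is affine in each block $(\delta^t_{a\mid o})_{a\in\calA}$ because each trajectory probability contains that block in at most one factor and to degree one (crucially using that the policy is time-indexed, so distinct times give distinct variables), hence block-by-block rounding to vertices of the product of simplices never decreases the value --- is the standard proof and is essentially the one in the cited reference. The only cosmetic point worth a remark is that under the conditioning $A_0=a$ the time-$0$ block of $\bfdelta$ is irrelevant to the objective, so rounding it is vacuous but harmless.
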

Theorem~\ref{theo:milp:NLP_optimal_solution} ensures that Problem~\eqref{pb:POMDPmlfiniteHorizon} and Problem~\eqref{pb:milp:NLP_pomdp} are equivalent, and in particular admit the same optimal solution in terms of $\bfdelta$. However, Problem~\eqref{pb:milp:NLP_pomdp} is hard to solve due to the nonlinear constraints~\eqref{eq:milp:NLP_indep_action}.
By Proposition~\ref{eq:milp:prop:det_policies}, we can add the integrality constraints of $\Deltaml^{\rm{d}}$ in~\eqref{pb:milp:NLP_pomdp}, and, by a classical result in integer programming, we can turn Problem~\eqref{pb:milp:NLP_pomdp} into an equivalent MILP: It suffices to replace constraint~\eqref{eq:milp:NLP_indep_action} by the following McCormick inequalities~\citep{Mccormick1976}.
\begin{subequations}\label{eq:milp:McCormick_linearization}
	\begin{alignat}{2}
		&\mu_{soa}^t \leq p(o|s) \mu_{s}^t & \quad \forall s \in \calS, o \in \calO, a \in \calA, t \in [T] \label{eq:milp:MILP_McCormick_1} \\
 		&\mu_{soa}^t \leq \delta^t_{a|o} & \quad  \forall s \in \calS, o \in \calO, a \in \calA, t \in [T] \label{eq:milp:MILP_McCormick_2} \\
 		&\mu_{soa}^t \geq p(o|s) \mu_{s}^t  + \delta^t_{a|o} - 1 & \quad \forall s \in \calS, o \in \calO, a \in \calA, t \in [T]. \label{eq:milp:MILP_McCormick_3}
	\end{alignat}
\end{subequations}
For convenience, we denote by $\mathrm{McCormick}\left(\bfmu, \bfdelta \right)$ the set of McCormick linear inequalities~\eqref{eq:milp:McCormick_linearization}. 
Thus, by using McCormick's linearization on constraints~\eqref{eq:milp:NLP_indep_action}, we get that~\eqref{pb:POMDPmlfiniteHorizon} is equivalent to the following MILP: 
\begin{equation}
\begin{aligned}\label{pb:milp:MILP_pomdp}
\max_{\bfmu, \bfdelta}  \enskip & \sum_{t=0}^T \sum_{\substack{s \in \calS \\ a \in \calA}} r_t(s,a) \mu_{sa}^t  & \quad & \\
\mathrm{s.t.} \enskip
 & \bfmu \ \mathrm{satisfies}~\eqref{eq:milp:NLP_initial_policy}-\eqref{eq:milp:NLP_consistency_s}\\
 & \mathrm{McCormick}\big(\bfmu,\bfdelta\big) \\
 & \bfdelta \in \Deltaml^{T,\rm{d}},  \bfmu \geq 0. 
\end{aligned}
\end{equation}

\subsection{Improving upper bound with valid inequalities}
\label{sub:milp:valid_cuts}


In this section, we introduce valid inequalities for MILP~\eqref{pb:milp:MILP_pomdp} that give a tighter linear relaxation. In addition, these inequalities will help to compute a good upper bound of the optimal history-dependent value.

We start by explaining why the linear relaxation of MILP~\eqref{pb:milp:MILP_pomdp} is not sufficient to define a feasible solution of Problem~\eqref{pb:milp:NLP_pomdp}.
It turns out that given a feasible solution $(\bfmu,\bfdelta)$ of the linear relaxation of MILP~\eqref{pb:milp:MILP_pomdp}, the vector $\bfmu$ is not necessarily the vector of moments of the probability distribution $\bbP_{\bfdelta}$ induced by $\bfdelta$. Indeed, when the coordinates of the vector $\bfdelta$ are continuous variables, the McCormick's constraints~\eqref{eq:milp:McCormick_linearization} are, in general, no longer equivalent to bilinear constraints~\eqref{eq:milp:NLP_indep_action}. Then, $(\bfmu,\bfdelta)$ is not necessarily a feasible solution of Problem~\eqref{pb:milp:NLP_pomdp} anymore, which implies that $\bfmu$ is not necessarily the vector of moments of the probability distribution $\bbP_{\bfdelta}$.
Actually, we can reduce the feasible set of the linear relaxation of MILP~\eqref{pb:milp:MILP_pomdp} by adding valid inequalities.
To do so, we introduce new variables $\left((\mu_{s'a'soa}^t)_{s',a',s,o,a}\right)_{t}$ and the inequalities

\begin{subequations}\label{eq:milp:Valid_cuts_pomdp}
    \begin{alignat}{2}
        &\sum_{s'\in \calS, a' \in \calA} \mu_{s'a'soa}^t = \mu_{soa}^{t}, \quad &\forall s \in \calS, o \in \calO, a \in \calA, \label{eq:milp:Valid_cuts_pomdp_consistency1}\\
        &\sum_{a \in \calA} \mu_{s'a'soa}^t = p(o|s)p(s|s',a')\mu_{s'a'}^{t-1}, \quad &\forall s',s \in \calS, o \in \calO, a' \in \calA, \label{eq:milp:Valid_cuts_pomdp_consistency2}\\
        &\mu_{s'a'soa}^t = p(s|s',a',o)\sum_{\ovs \in \calS} \mu_{s'a'\ovs oa}^t, \quad  &\forall s',s \in \calS, o \in \calO, a',a \in \calA, \label{eq:milp:Valid_cuts_pomdp_main}
    \end{alignat}
\end{subequations} 
\noindent where we use the constants $$p(s|s',a',o) = \bbP(S_t=s | S_{t-1}=s',A_{t-1}=a',O_t=o),$$
for any $s,s' \in \calS$, $a'\in \calA$ and $o \in \calO$.
Note that $p(s|s',a',o')$ does not depend on the policy $\bfdelta$ and can be easily computed during a preprocessing using Bayes rules. Therefore, constraints in~\eqref{eq:milp:Valid_cuts_pomdp} are linear.

\begin{restatable}{prop}{ValidInequalities}\label{prop:milp:valid_cuts_pomdp}
    Inequalities \eqref{eq:milp:Valid_cuts_pomdp} are valid for \textup{MILP}~\eqref{pb:milp:MILP_pomdp}, and there exists a solution $\bfmu$ of the linear relaxation of~\eqref{pb:milp:MILP_pomdp} that does not satisfy constraints~\eqref{eq:milp:Valid_cuts_pomdp}. 
\end{restatable}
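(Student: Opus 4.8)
I would treat the two assertions in turn. For \textbf{validity}, let $(\bfmu,\bfdelta)$ be any feasible solution of the (integer) program~\eqref{pb:milp:MILP_pomdp}, so $\bfdelta\in\Deltaml^{T,\rm{d}}$. When $\delta_{a|o}^t\in\{0,1\}$, the McCormick inequalities~\eqref{eq:milp:McCormick_linearization} are equivalent to the bilinear equalities~\eqref{eq:milp:NLP_indep_action}, hence $(\bfmu,\bfdelta)$ is feasible for NLP~\eqref{pb:milp:NLP_pomdp} and Theorem~\ref{theo:milp:NLP_optimal_solution} applies: $\bfmu$ is the vector of moments of the distribution $\bbP_{\bfdelta}$ induced by $\bfdelta$. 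I would then lift the solution by the natural choice $\mu_{s'a'soa}^t:=\bbP_{\bfdelta}(S_{t-1}=s',A_{t-1}=a',S_t=s,O_t=o,A_t=a)$ for every relevant $t\geq 1$, which is nonnegative, and verify that it satisfies~\eqref{eq:milp:Valid_cuts_pomdp}; exhibiting such a lift for every integer point is exactly what validity for MILP~\eqref{pb:milp:MILP_pomdp} means. Equality~\eqref{eq:milp:Valid_cuts_pomdp_consistency1} is then just marginalization over $(S_{t-1},A_{t-1})$ together with~\eqref{eq:milp:def_moments_soa}, and equality~\eqref{eq:milp:Valid_cuts_pomdp_consistency2} follows by summing over $A_t$ (which marginalizes out since $\sum_a\delta_{a|o}^t=1$) and using the one-step POMDP structure $\bbP_{\bfdelta}(S_{t-1}=s',A_{t-1}=a',S_t=s,O_t=o)=\mu_{s'a'}^{t-1}\,p(s|s',a')\,p(o|s)$.

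The only substantive family is~\eqref{eq:milp:Valid_cuts_pomdp_main}: after dividing by $\sum_{\bar s}\mu_{s'a'\bar s oa}^t$ it amounts to the identity $\bbP_{\bfdelta}(S_t=s\mid S_{t-1}=s',A_{t-1}=a',O_t=o,A_t=a)=p(s|s',a',o)$, i.e. that conditioning additionally on $A_t=a$ does not change the conditional law of $S_t$ given $(S_{t-1},A_{t-1},O_t)$. This is where memorylessness is used: $A_t$ is drawn from $\delta^t(\cdot\mid O_t)$, so in the Bayes ratio $p(s|s',a',o)=p(s|s',a')p(o|s)\big/\sum_{\bar s}p(\bar s|s',a')p(o|\bar s)$ the factor $\delta_{a|o}^t$ appears in both numerator and denominator and cancels. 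I expect the only delicate point of part one to be the degenerate case $\bbP_{\bfdelta}(S_{t-1}=s',A_{t-1}=a',O_t=o)=0$, in which both sides of~\eqref{eq:milp:Valid_cuts_pomdp_main} and of~\eqref{eq:milp:Valid_cuts_pomdp_consistency2} vanish regardless of the preprocessed constant $p(s|s',a',o)$; this bookkeeping is the main (and minor) obstacle, everything else being routine once the moment interpretation is in hand.

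For \textbf{non-redundancy}, I would first record a consequence of the cuts that the McCormick relaxation alone does not imply: summing~\eqref{eq:milp:Valid_cuts_pomdp_consistency1} over $a$, substituting~\eqref{eq:milp:Valid_cuts_pomdp_consistency2}, and using the dynamics constraint~\eqref{eq:milp:NLP_consistency_s} yields $\sum_{a\in\calA}\mu_{soa}^t=p(o|s)\,\mu_s^t$ for all $s,o$ and all $t\geq 1$, for any $\bfmu$ admitting a feasible lift. It then suffices to display one point of the linear relaxation of~\eqref{pb:milp:MILP_pomdp} violating this equality. Take the instance with a single state, two observations $o_1,o_2$ with $p(o_1|s)=p(o_2|s)=\tfrac12$, two actions, and horizon $T=1$; at $t=1$ set $\delta_{a|o}^1=\tfrac12$ for all $a,o$, then $\mu_{s o_1 a}^1=\tfrac12$ for both actions $a$, $\mu_{s o_2 a}^1=0$, $\mu_{sa}^1=\tfrac12$, $\mu_s^1=1$, and complete $t=0$ by the product assignment $\mu_{soa}^0=\tfrac14$ with $\delta_{a|o}^0=\tfrac12$. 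One checks directly that this point satisfies~\eqref{eq:milp:NLP_initial_policy}--\eqref{eq:milp:NLP_consistency_s}, the McCormick inequalities, $\bfdelta\in\Deltaml^{1}$ and $\bfmu\geq 0$, yet $\sum_a\mu_{s o_1 a}^1=1\neq\tfrac12=p(o_1|s)\,\mu_s^1$; hence it admits no lift and violates~\eqref{eq:milp:Valid_cuts_pomdp}. The conceptual subtlety to flag is that~\eqref{eq:milp:Valid_cuts_pomdp} does not involve $\bfdelta$ at all, so the violating $\bfmu$ must fail to be the vector of moments of \emph{any} policy — which is possible precisely because a fractional $\delta^1$ leaves slack in the McCormick inequalities~\eqref{eq:milp:McCormick_linearization}.
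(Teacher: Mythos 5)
Your proof is correct, and it splits naturally into a half that mirrors the paper and a half that does not. For \textbf{validity}, you and the paper use the same lift: the paper sets $\mu_{s'a'soa}^t=\delta_{a|o}^t\,p(o|s)\,p(s|s',a')\,\mu_{s'a'}^{t-1}$ and verifies \eqref{eq:milp:Valid_cuts_pomdp} by direct algebra, which for integral $\bfdelta$ is exactly your probabilistic lift $\bbP_{\bfdelta}(S_{t-1}=s',A_{t-1}=a',S_t=s,O_t=o,A_t=a)$ once Theorem~\ref{theo:milp:NLP_optimal_solution} identifies $\bfmu$ with the moments; the degenerate case $\sum_{\ovs}\mu_{s'a'\ovs oa}^t=0$ is handled identically in both. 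For \textbf{non-redundancy} you genuinely diverge. The paper works on a generic instance: it builds a fractional point with $\mu_{soa}^t=\mathds{1}_{\phi(s)}(a)\,p(o|s)\,\mu_s^t$ for an arbitrary map $\phi:\calS\to\calA$, checks McCormick feasibility, writes down one particular lift satisfying \eqref{eq:milp:Valid_cuts_pomdp_consistency1}--\eqref{eq:milp:Valid_cuts_pomdp_consistency2}, and argues that \eqref{eq:milp:Valid_cuts_pomdp_main} fails for a suitable choice of $\phi$. You instead derive the projected consequence $\sum_{a}\mu_{soa}^t=p(o|s)\,\mu_s^t$ (which uses only \eqref{eq:milp:Valid_cuts_pomdp_consistency1}, \eqref{eq:milp:Valid_cuts_pomdp_consistency2} and \eqref{eq:milp:NLP_consistency_s}) and exhibit a small explicit instance and point violating it. Your route buys rigor on a point the paper glosses over: since \eqref{eq:milp:Valid_cuts_pomdp} involves auxiliary variables, ``does not satisfy'' should mean ``admits no feasible lift,'' and refuting a projected equality certifies exactly that, whereas the paper only shows that its chosen lift fails. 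It is also worth noting that the two separating points are different in kind: the paper's $\phi$-construction actually satisfies $\sum_a\mu_{soa}^t=p(o|s)\mu_s^t$ and is cut off only by the conditional-independence family \eqref{eq:milp:Valid_cuts_pomdp_main}, while yours is already cut off by the first two families. What the paper's construction buys in exchange is that its point lives in the relaxation of essentially any given instance (with $|\calA|\geq 2$ and a suitable $\phi$), whereas your certificate is tied to one hand-built instance; if the proposition is read per-instance, you should add a remark that the same kind of fractional slack in McCormick produces a violation on general instances.
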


The MILP formulation obtained by adding inequalities~\eqref{eq:milp:Valid_cuts_pomdp} in MILP~\eqref{pb:milp:MILP_pomdp} is an extended formulation, and has many more constraints than the initial MILP~\eqref{pb:milp:MILP_pomdp}. 
Its linear relaxation therefore takes longer to solve and the use of these linear inequalities may slow down the resolution on large scale instances.
In Appendix~\ref{app:validInequalities:probaInterpretation}, we show that valid inequalities~\eqref{eq:milp:Valid_cuts_pomdp} have also a probabilistic interpretation. Indeed, these equations correspond to conditional independences between the random variables of the problem, that are not induced in the linear relaxation of MILP~\eqref{pb:milp:MILP_pomdp}.

\paragraph{Improving the bounds on finite horizon.}
Our main use of the valid inequalities is to obtain an upper bound on the optimal value of history-dependent policies. 
Indeed, it turns out that solving the linear relaxation of MILP~\eqref{pb:milp:MILP_pomdp} with valid inequalities~\eqref{eq:milp:Valid_cuts_pomdp} enables to obtain a tighter upper bound.
In addition, Theorem~\ref{theo:milp:MDP_approx_equivalence_finiteHorizon} below states that the linear relaxation of MILP~\eqref{pb:milp:MILP_pomdp} is equivalent to the MDP approximation of POMDP with finite horizon.
We denote respectively by $z_{\rmRc}^{T}(\bfb)$ and $z_{\rmR}^{T}(\bfb)$ the value of the linear relaxation of MILP~\eqref{pb:milp:MILP_pomdp} with and without valid inequalities~\eqref{eq:milp:Valid_cuts_pomdp}.
 
\begin{restatable}{theo}{MDPequivalence}\label{theo:milp:MDP_approx_equivalence_finiteHorizon}
    For any belief $\bfb$ in $\calB$ and any horizon $T$ in $\bbZ_{+}$, the linear relaxation of MILP~\eqref{pb:milp:MILP_pomdp} is equivalent to the MDP approximation with finite horizon $T$, and, the following inequalities hold:
    \begin{equation}\label{eq:milp:inequality_information}
        w_{\rmml}^{T}(\bfb) \leq w^{T}(\bfb) \leq z_{\rmRc}^{T}(\bfb) \leq z_{\rmR}^{T}(\bfb).
    \end{equation}
\end{restatable}

Inequalities~\eqref{eq:milp:inequality_information} tell us that we are able to give a bound on the optimality gap between memoryless policies and history-dependent policies. 
This gap is an indicator of the value of information loss by restricting to memoryless policies.  
In Section~\ref{sec:tailoredMILP}, we extend this result to the infinite discounted case by modifying the reward function. 

\section{MILP formulation for SMF policy}
\label{sec:tailoredMILP}

In this section, we detail how to use MILP~\eqref{pb:milp:MILP_pomdp} in order to solve the value function problem~\eqref{eq:memorylessFromBeliefValueFunction}, and the strength of its linear relaxation.
To do so, given an instance of a generic POMDP, we introduce a specific  instance of POMDP with finite horizon $T$ and the reward function $\tilder_t$ such that $\tilder_t(s,a) = \gamma^t r(s,a)$ for $t\leq T-1$, and $\tilder^{T}(s,a) = \gamma^T r(s,a) + \gamma^{T+1}\sum_{s'\in \calS}p(s'|s,a)v_{\rmMDP}(s')$, for any $s$ in $\calS$ and $a$ in $\calA$.
Given an initial belief $\bfb$ in $\calB$, we denote respectively by $\tildew_{\rmml}^T(\bfb)$ , $\tildez_{\rmRc}^T(\bfb)$ and $\tildez_{\rmR}^T(\bfb)$ the optimal value of memoryless policies, the value of the relaxation of our MILP with and without valid inequalities, for the instance with reward function $\tilder$.

\begin{prop}\label{prop:computingSMFApproximationWithFiniteHorizon}
    The optimal solutions of NLP~\eqref{pb:milp:NLP_pomdp} (resp.~MILP~\ref{pb:milp:MILP_pomdp}) coincide with the (resp. deterministic) SMF policies, and, $\hat v_{\rmSMF}^T(\bfb) = \tildew_{\rmml}^T(\bfb)$.
\end{prop}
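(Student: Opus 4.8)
The plan is to recognize that Proposition~\ref{prop:computingSMFApproximationWithFiniteHorizon} is essentially a bookkeeping statement: the specific POMDP instance with reward function $\tilder$ introduced in Section~\ref{sec:tailoredMILP} is built precisely so that the finite-horizon memoryless objective~\eqref{pb:POMDPmlfiniteHorizon} for $\tilder$ reproduces the inner maximization in~\eqref{eq:memorylessFromBeliefValueFunction}. So the first step is to write out $\bbE_{\bfdelta}\bigl[\sum_{t=0}^{T}\tilder_t(S_t,A_t)\mid S_0\sim\bfb, A_0=a\bigr]$ and substitute the definition $\tilder_t(s,a)=\gamma^t r(s,a)$ for $t\le T-1$ and $\tilder^T(s,a)=\gamma^T r(s,a)+\gamma^{T+1}\sum_{s'}p(s'|s,a)v_{\rmMDP}(s')$ for $t=T$. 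The term $\bbE_{\bfdelta}\bigl[\gamma^{T+1}\sum_{s'}p(s'|S_T,A_T)v_{\rmMDP}(s')\bigr]$ equals $\bbE_{\bfdelta}\bigl[\gamma^{T+1}v_{\rmMDP}(S_{T+1})\bigr]$ by the tower property, conditioning on $(S_T,A_T)$ and using the transition kernel $p(s'|s,a)$. Hence the expected value under $\tilder$ coincides term-by-term with $\bbE_{\bfdelta}\bigl[\sum_{t=0}^{T}\gamma^t r(S_t,A_t)+\gamma^{T+1}v_{\rmMDP}(S_{T+1})\bigr]$, which is exactly $\hat Q_{\rmSMF}^T(\bfb,a)$.

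Next I would take the maximum over $a\in\calA$ and $\bfdelta\in\Deltaml^T$ on both sides. By~\eqref{pb:POMDPmlfiniteHorizon} the left-hand side is $\tildew_{\rmml}^T(\bfb)$, and by~\eqref{eq:memorylessFromBeliefValueFunction} the right-hand side is $\max_{a'}\hat Q_{\rmSMF}^T(\bfb,a')=\hat v_{\rmSMF}^T(\bfb)$; this gives the identity $\hat v_{\rmSMF}^T(\bfb)=\tildew_{\rmml}^T(\bfb)$. One subtlety to flag here: the definition~\eqref{eq:memorylessFromBeliefValueFunction} fixes $A_0=a$ and maximizes over memoryless $\bfdelta$ separately from $a$, whereas~\eqref{pb:POMDPmlfiniteHorizon} also fixes $A_0=a_0$; since Constraints~\eqref{eq:milp:NLP_initial_policy} anyway force $\delta^0$ to be independent of the initial observation, fixing $A_0$ and letting $\delta^0$ vary describe the same set of achievable initial actions, so the two formulations agree. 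I would state this compatibility explicitly so the reader sees why the "$\max_{a'}$" outside and the "$A_0=a$" conditioning match up.

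For the statement about optimal solutions, I would invoke Theorem~\ref{theo:milp:NLP_optimal_solution} applied to the instance with reward $\tilder$: $(\bfmu,\bfdelta)$ is optimal for NLP~\eqref{pb:milp:NLP_pomdp} if and only if $\bfdelta$ is an optimal memoryless policy for Problem~\eqref{pb:POMDPmlfiniteHorizon} with reward $\tilder$, which by the identity above is the same as $\bfdelta$ attaining $\max_{\bfdelta\in\Deltaml^T}\hat Q_{\rmSMF}^T(\bfb,a)$ for the optimal initial action — i.e.\ $\bfdelta$ induces an SMF policy in the sense of~\eqref{eq:history_dep_pol:policySMF} at belief $\bfb$. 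For the MILP claim I would additionally invoke Proposition~\ref{prop:milp:det_policies} (restriction to deterministic memoryless policies is without loss) together with the equivalence of MILP~\eqref{pb:milp:MILP_pomdp} and NLP~\eqref{pb:milp:NLP_pomdp} over $\Deltaml^{T,\mathrm d}$ established in Section~\ref{sub:milp:MILP}, which identifies the MILP optimizers with the deterministic SMF policies.

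The main obstacle is not any hard estimate but making the "coincide with the SMF policies" claim precise: an optimal $\bfdelta$ of the program is a full memoryless policy over the whole horizon $T$, whereas the SMF policy $\delta_{\rmSMF(T)}(\cdot|\bfb)$ as used in~\eqref{eq:history_dep_pol:policySMF} only extracts the time-$0$ action $\argmax_{a'}\hat Q_{\rmSMF}^T(\bfb,a')$. I would resolve this by clarifying that the program computes an optimal memoryless policy whose induced initial action is an SMF action, and that the proposition's phrasing identifies the programs' optimal solutions with the (deterministic) optimal memoryless policies underlying the SMF construction at $\bfb$; the remaining content, $\hat v_{\rmSMF}^T(\bfb)=\tildew_{\rmml}^T(\bfb)$, is then immediate from the term-by-term reward identity. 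I would also double-check the edge indexing at $t=T$ versus $t=T+1$ (the tail uses $v_{\rmMDP}(S_{T+1})$ while $\Deltaml^T$ only has decision variables for $t\in[T]$), since that is the one place a routine off-by-one could creep in.
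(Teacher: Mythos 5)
Your proposal is correct and follows essentially the same route as the paper: identify the tail term $\gamma^{T+1}\sum_{s'}p(s'|S_T,A_T)\,v_{\rmMDP}(s')$ with $\gamma^{T+1}v_{\rmMDP}(S_{T+1})$ via the tower property so that the $\tilder$-objective matches $\hat Q_{\rmSMF}^T(\bfb,a)$ term by term, reconcile the outer $\max_{a'}$ with the observation-independence constraint~\eqref{eq:milp:NLP_initial_policy} on $\delta^0$, and then invoke Theorem~\ref{theo:milp:NLP_optimal_solution} (plus the McCormick linearization and Proposition~\ref{prop:milp:det_policies} for the MILP case). If anything, you make explicit two points the paper leaves implicit — the conditioning step for the tail reward and the fact that only the induced time-$0$ action of the optimal memoryless policy enters the SMF policy~\eqref{eq:history_dep_pol:policySMF} — so no gap.
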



\paragraph{Strengths of the linear relaxation}
It turns out that solving the linear relaxation of MILP~\eqref{pb:milp:MILP_pomdp} with reward function $\tilder$ gives a collection of upper bounds on the optimal history-dependent value $v^*(\bfb)$ for any belief $\bfb$ in $\calB$. Like Theorem~\ref{theo:milp:MDP_approx_equivalence_finiteHorizon} for finite horizon problem,  Theorem~\ref{theo:milp:MDP_approx_equivalence_infinite} below states that by adding valid inequalities~\eqref{eq:milp:Valid_cuts_pomdp} in MILP~\eqref{pb:milp:MILP_pomdp}, the linear relaxation gives also an upper bound. 
Theorem~\ref{theo:milp:MDP_approx_equivalence_infinite} says even more: the larger the finite horizon, the tighter the linear relaxations.  

\begin{restatable}{theo}{MDPequivalenceInfinitehorizon}\label{theo:milp:MDP_approx_equivalence_infinite}
    For any belief $\bfb$ in $\calB$, finite horizons $T, T'$ in $\bbZ_{+}$, the following inequalities hold:
    \begin{align}
        & v_{\rmSMF(T)}(\bfb) \leq v^*(\bfb) \leq \tilde z_{\rmRc}^{T'}(\bfb) \leq \tilde z_{\rmR}^{T'}(\bfb) \label{eq:milp:inequality_information}
        \\
    	& \tildez_{\rmRc}^{T+1}(\bfb) \leq \tildez_{\rmRc}^{T}(\bfb) \label{eq:milp:decreasing_bounds}
    \end{align}
    In addition, the upper bound $\tildez_{\rmR}^{T}(\bfb)$ is constant with respect to $T$, i.e., $\tilde z_{\rmR}^{T}(\bfb)= \tildez_{\rmR}^{0}(\bfb)$.
\end{restatable}

Inequality~\eqref{eq:milp:inequality_information} ensures that we can measure the quality the SMF policy.
Indeed, it says that the gap $\tildez_{\rmRc}^{T'}(\bfb)- v_{\rmSMF(T)}(\bfb)$ bounds the gap $v^{*}(\bfb) - v_{\rmSMF(T)}(\bfb)$. 
In addition, it tells us that we obtain a tighter measure of the optimality gap by adding valid inequalities~\eqref{eq:milp:Valid_cuts_pomdp} in the linear relaxation of MILP~\eqref{pb:milp:MILP_pomdp} with reward functions $\tilder$.
One can notice that Theorem~\ref{theo:milp:MDP_approx_equivalence_infinite} is a extended version of Proposition 4 of \citet{bertsimas2016decomposable} to POMDPs.
Inequality~\eqref{eq:milp:decreasing_bounds} ensures that increasing the finite horizon leads to a tighter upper bound. Intuitively, this result is due to the fact that by increasing the finite horizon, we induce more constraints, which correspond to conditional independence (see Appendix~\ref{app:validInequalities:probaInterpretation}). 
It highlights a trade-off between the quality of the relaxation and the tractability of such an upper bound. 

\section{Numerical experiments}
\label{sec:num}

In this section we provide numerical experiments showing the efficiency of our approach.
First, we describe the benchmark instances on which we evaluate our approach. 
Second, we show the performances of our MILP in solving memoryless POMDP with finite horizon and we compare the results with the optimal history-dependent value.
Third, we evaluate the performances of our SMF policy on the benchmark instances and we compare it with a the state-of-the-art POMDP algorithm.
All the mathematical programs have been written in \texttt{Julia} \citep{bezanson2017julia} with the \texttt{JuMP} \citep{DunningHuchetteLubin2017} interface and solved using \texttt{Gurobi} 9.0. \citep{gurobi} with the default settings. 
All the state-of-the-art POMDP solvers against which we compare our approach are implemented in the Julia library \texttt{POMDPs.jl} of \citet{EgorovSBWGK17}.
Experiments have been run on a server with 192Gb of RAM and 32 cores at 3.30GHz.

\subsection{Benchmark instances}
\label{sub:num:benchmarkInstances}

All the instances considered in this paper can be downloaded on~\url{http://pomdp.org/examples/}.
In Table~\ref{tab:schedulingResultsDetailed}, we summarize the main properties of these instances.
The first column indicates the name of the instances. 
Then, the next three columns report the sizes of the state spaces, observation spaces and action spaces.
Finally, the fourth column indicates the sparsity of the instances. Since there is no definition of this measure in the literature, we define it as the percentage of null entries in the transition probability distributions and the emission probability distributions. More precisely, we introduce
$$\mathrm{Sparsity} = \frac{\sum_{s,o} \mathds{1}_{p(o|s)= 0} + \sum_{s,a,s'}\mathds{1}_{p(s'|s,a)= 0}}{\vert \calS \vert \vert \calO \vert + \vert \calS \vert^2 \vert \calA \vert}.$$
The sparser the instances, the sparser the constraints matrix of MILP~\eqref{pb:milp:MILP_pomdp}. 
Even if this indicator has to be considered cautiously, it measures our ability to solve MILP~\eqref{pb:milp:MILP_pomdp}.

For most of the instances, an initial belief is provided. If no initial belief is provided, then we consider that the initial belief is an uniform distribution over the state space. In order to lighten the notation, we omit the dependence in the initial belief in the rest of the paper.

\begin{table}
\begin{minipage}{0.5\textwidth}
\begin{center}
\scalebox{0.6}
{

\begin{tabular}{ccc@{\hspace{0.2cm}}rrrr@{\hspace{0.2cm}}rrrrr}

\toprule

Name &&

\multicolumn{3}{c}{Size} && 

\multicolumn{1}{c}{Sparsity $(\%)$} \\


&&

$\calX_S$&

$\calX_O$&

$\calX_A$&

 & &  

\\

\midrule

\texttt{1d.noisy} && 4 & 2 & 2 && 45.8 \\
\texttt{1d} && 4 & 2 & 2 && 58.3 \\
\texttt{bridge-repair} && 5 & 5 & 12 && 47.8 \\
\texttt{cheese} && 11 & 7 & 4 && 84.3 \\

\texttt{cheng.D5-1} && 5 & 3 & 3 && 0.0  \\
\texttt{easy-bridge} && 5 & 5 & 2 && 59.0  \\ 
\texttt{ejs1} && 3 & 2 & 4 && 46.7 \\
\texttt{ejs2} && 2 & 2 & 2 && 0.0  \\
\texttt{ejs3} && 2 & 2 & 2 && 0.0 \\

\texttt{line4-2goals} && 4 & 1 & 2 && 40.0  \\
\texttt{marking} && 9 & 3 & 4 && 79.9 \\
\texttt{mcc-example1} && 4 & 3 & 3 && 42.9 \\

\texttt{paint} && 4 & 2 & 4 && 52.1 \\






\bottomrule


\end{tabular} 
}

\end{center}
\end{minipage}
\begin{minipage}{0.5\textwidth}
\begin{center}

\scalebox{0.6}
{

\begin{tabular}{ccc@{\hspace{0.2cm}}rrrr@{\hspace{0.2cm}}rrrrr}

\toprule

Name &&

\multicolumn{3}{c}{Size} && 

\multicolumn{1}{c}{Sparsity $(\%)$} \\


&&

$\calX_S$&

$\calX_O$&

$\calX_A$&

 & &  

\\

\midrule


\texttt{query.s2} && 9 & 3 & 2 && 5.6 \\
\texttt{saci-s12-a6-z5.95} && 12 & 5 & 6 && 70.3 \\
\texttt{sunysb} && 300 & 28 & 4 && 91.1 \\


\texttt{web-ad} && 4 & 5 & 3 && 38.9 \\
\texttt{web-mall} && 2 & 2 & 3 && 8.3 \\
\texttt{4x3} && 11 & 6 & 4 && 71.7 \\
\texttt{4x4} && 16 & 2 & 4 && 84.0 \\
\texttt{4x5x2} && 39 & 4 & 4 && 93.1 \\


\texttt{stand-tiger} && 4 & 4 & 4 && 59.4 \\
\texttt{mini-hall2} && 13 & 9 & 3 && 87.1 \\
\texttt{network} && 7 & 2 & 4 && 46.4 \\ 
\texttt{tiger} && 2 & 2 & 3 && 8.3 \\
\texttt{shuttle} && 8 & 5 & 3 && 79.5 \\

\bottomrule


\end{tabular} 
}
\end{center}
\end{minipage}

\caption{Description of the benchmark instances.}
\label{tab:schedulingResultsDetailed}

\end{table}

\subsection{Performances of memoryless policies on finite horizon}
\label{sub:num:memoryLessFiniteHorizon}

The goal of this section is to show how memoryless policies perform on several benchmark POMDP instances with finite horizon. To do so, given a finite horizon $T$ we compute the optimal memoryless value $w_{\rmml}^{T}$ by solving MILP~\eqref{pb:milp:MILP_pomdp} and we compare it with the optimal value of POMDP with finite horizon $w^{T}$: The smaller the difference $w^{T} - w_{\rmml}^{T}$, the better the optimal memoryless policies.
Since we cannot directly compute $v^{T}$, we compute instead its upper bound $z_{\rmRc}^{T}$. 
We introduce the gap $G(u) = (z_{\rmRc}^{T} - u)/z_{\rmRc}^{T}$ for any $u$ in $\bbR$.
Thanks to Theorem~\ref{theo:milp:MDP_approx_equivalence_infinite}, the gap $G(w_{\rmml}^{T})$ is a guarantee of the quality of the optimal memoryless policy.
In this section, we compute $w_{\rmml}^{T}$ in two ways: By solving MILP~\eqref{pb:milp:MILP_pomdp} and by solving NLP~\eqref{pb:milp:NLP_pomdp} using \texttt{Gurobi} NLP solver. 

Table~\ref{tab:num:memorylessPOMDP} summarizes the results. 
The first column indicates the instances. The next four columns report two solver statistics about the resolution of the mathematical programs: the optimality gap (Opt. gap) at the end of the resolution and the computation time in seconds. This optimality gap is the relative gap between the best solution obtained and the best bound computed during the resolution. 
The last two columns report the values of the gap $G(v_{\rmml}^{T})$ computed respectively using MILP~\eqref{pb:milp:MILP_pomdp} and NLP~\eqref{pb:milp:NLP_pomdp}. In order to distinguish these values, we denote respectively by $w_{\rmMILP}$ and $w_{\rmNLP}$ the values obtained by solving MILP~\eqref{pb:milp:MILP_pomdp} and NLP~\eqref{pb:milp:NLP_pomdp}.

All instances are solved with a finite horizon $T=20$. 
We set a computation time limit of $3600$ seconds. If the optimum has not been reached after $3600$ seconds, then we keep the best solution found and the best upper bound found at the end of the resolution.

\begin{table}
\centering
\scalebox{0.6}
{

\begin{tabular}{c@{\qquad}rr@{\qquad}rr@{\qquad\qquad}rr}

\toprule

\multirow{3}{*}{Instances} & \multicolumn{4}{c}{Math. prog. resolution} & \multicolumn{2}{c}{\text{Quality of ml. policy}}  \\
                           & \multicolumn{2}{c}{MILP} & \multicolumn{2}{c}{NLP} & \multicolumn{1}{c}{MILP} & \multicolumn{1}{c}{NLP} \\
                           & Opt. gap($\%$) & Time(s) & Opt. gap($\%$) & Time(s) &  $G(w_{\rmMILP}^{20})$ ($\%$) & $G(w_{\rmNLP}^{20})$ ($\%$) \\

\midrule

4x3  & Opt. &  137 &  Opt. & 564 & \multicolumn{1}{c}{26.2}  & \multicolumn{1}{c}{26.2}  \\

4x4  & Opt. &  14 & Opt. & 28 & \multicolumn{1}{c}{11.5}  & \multicolumn{1}{c}{11.5}  \\

4x5x2 & 0.2 & $>3600$ & 0.3 & $>3600 $ & \multicolumn{1}{c}{47.6}  & \multicolumn{1}{c}{47.6} \\

1d.noisy  & Opt. & 2.3 & Opt. & 2.3 & \multicolumn{1}{c}{17.5}  & \multicolumn{1}{c}{17.5} \\

1d  & Opt. & 0.5 & Opt. & 2.1 & \multicolumn{1}{c}{15.1}  & \multicolumn{1}{c}{15.1}  \\



cheese & Opt. & 8 & Opt. & 22 & \multicolumn{1}{c}{8.2}  & \multicolumn{1}{c}{8.2} \\

cheng.D3-1 & Opt. & 1888 & 0.1 & 3609.5 & \multicolumn{1}{c}{0.7}  & \multicolumn{1}{c}{0.7} \\

cheng.D4-1  & 0.1 & $>3600$ & 0.1 & $>3600$ & \multicolumn{1}{c}{3.5}  & \multicolumn{1}{c}{3.5} \\

cheng.D5-1  & 0.1 & $>3600$ & 0.1 & $>3600$ & \multicolumn{1}{c}{1.7}  & \multicolumn{1}{c}{1.7} \\

ejs1 & Opt. & 0.8 & Opt. & 2.8 & \multicolumn{1}{c}{6.8} & \multicolumn{1}{c}{6.8} \\

ejs2 & Opt. & 7.3 & Opt. & 11.7 & \multicolumn{1}{c}{7.2} & \multicolumn{1}{c}{7.2} \\

ejs3 & Opt. & 0.1 & Opt. & 1.8 & \multicolumn{1}{c}{37.6} & \multicolumn{1}{c}{37.6} \\


marking & Opt. & $57$ & Opt. & $66$  & \multicolumn{1}{c}{13.6}  & \multicolumn{1}{c}{13.6} \\

line4-2goals & Opt. & 0.1 & Opt. & 1.7 & \multicolumn{1}{c}{5.6}  & \multicolumn{1}{c}{5.6} \\

mcc-example1 & Opt. & 2.7 & Opt. & 2.9  & \multicolumn{1}{c}{6.4}  & \multicolumn{1}{c}{6.4} \\

mcc-example2 & Opt. & 2.8 & Opt. & 2.9  & \multicolumn{1}{c}{5.8}  & \multicolumn{1}{c}{5.8} \\

mini-hall2  & Opt. & 499 & Opt. & 339 & \multicolumn{1}{c}{15.4}  & \multicolumn{1}{c}{15.4} \\

paint & Opt. & 3e-1 & Opt. & 4e-1 & \multicolumn{1}{c}{63.7}  & \multicolumn{1}{c}{63.8} \\

parr95  & Opt. & 20 & Opt. & 71 & \multicolumn{1}{c}{19.0} & \multicolumn{1}{c}{19.0} \\

query.s2  & Opt. & 1.7 & Opt. & 12 & \multicolumn{1}{c}{0.1} & \multicolumn{1}{c}{0.1} \\


saci-s12-a6-z5.95 & Opt. & 129 & Opt. & 412 & \multicolumn{1}{c}{0.0}  & \multicolumn{1}{c}{0.0} \\

shuttle & Opt. & 4 & Opt. & 11 & \multicolumn{1}{c}{7.7}  & \multicolumn{1}{c}{7.7} \\

stand-tiger & 0.7 & $>3600$ & 0.7 & $>3600$ & \multicolumn{1}{c}{80.9}  & \multicolumn{1}{c}{78.8} \\

tiger & Opt. & 945 & Opt. & 2530 & \multicolumn{1}{c}{122.2}  & \multicolumn{1}{c}{122.2} \\

network & Opt. & 1368 & Opt. & 2268 & \multicolumn{1}{c}{36.0}  & \multicolumn{1}{c}{36.0} \\

web-mall & Opt. & $1657$ & Opt. & $2888$ & \multicolumn{1}{c}{64.5}  & \multicolumn{1}{c}{64.5} \\

web-ad & Opt. & 1 & Opt. & $8$ & \multicolumn{1}{c}{0.0}  & \multicolumn{1}{c}{0.0} \\

aircraftID & Opt. & $22$ & Opt. & $202$ & \multicolumn{1}{c}{0.0}  & \multicolumn{1}{c}{0.0} \\

\bottomrule


\end{tabular} 
}
\caption{MILP performances and quality of memoryless policies on POMDPs with finite horizon.}
\label{tab:num:memorylessPOMDP}
\end{table}

\subsection{Performances of SMF policy on infinite discounted horizon}
\label{sub:num:infinite_horizon}

We evaluate the efficiency of our approach against SARSOP solver \citep{Kurniawati08sarsop}, which is one of the state-of-the-art POMDP solver.
For each instance, we set the discount factor $\beta = 0.95$.
A policy is evaluated by running $1000$ simulations over a large finite horizon of $100$ time steps. Then, we compute the average of the total reward over the $1000$ simulations.
The SMF policy is computed with two different rolling horizon $T$ in $\{2,5\}$. 
In addition, we compute the upper bounds $\tildez_{\rmRc}^{T_{\rmub}}$ with $T_{\rmub}=100$. Thanks to Theorem~\ref{theo:milp:MDP_approx_equivalence_infinite}, the value $\tildez_{\rmRc}^{T_{\rmub}}$ is an upper bound of $v^*$ for any value of $T_{\rmub}$. Hence, the gap $G^{T_{\rmub}} = (\tildez_{\rmRc}^{T_{\rmub}} - v_{\rmSMF(T)})/\tildez_{\rmRc}^{T_{\rmub}}$ is an indicator of the quality of the policy: The smaller the gap, the better the policy.
However we do not know how far is the value $\tildez_{\rmRc}^{T_{\rmub}}$ from the optimal value $v^*$, so the value of this gap can be large, even if the policy is of good quality.

In order to illustrate Theorem~\ref{theo:milp:MDP_approx_equivalence_infinite}, we also compute the upper bounds with finite horizon $T_{\rmub}$ in $\{20, 100 \}$ and we report the gap $G_{\rmub}^{T_{\rmub}} = (\tildez_{\rmR}^{0} - \tildez_{\rmRc}^{T_{\rmub}})/\tildez_{\rmR}^{0}$, which indicates the relative improvement of the upper bound with respect to the largest upper bound, i.e., the MDP approximation.

Since SARSOP policy is computed offline and our policy is computed online, we cannot easily compare the computation times. 
We propose to report the averaged time to take an action. The SARSOP computation time reported in Table~\ref{tab:num:FSMBSPolicyInfiniteDisc} correspond to the offline computation time divided by the number of simulations $1000$.  

Table~\ref{tab:num:FSMBSPolicyInfiniteDisc} summarizes the results. The first column indicates the POMDP instance. 
The next four columns indicate respectively the gaps $G^{100}$ and the average computation time, each of them obtained by simulating SMF policy with rolling horizon $T_{\rmr}=2$ and $T_{\rmr}=5$.
Then, the last two columns indicate respectively the gaps $G^{100}$ and the average computation time obtained by simulating SARSOP policy.
Finally, the last two columns report the upper bound improvement, which is indicated by the gap $G_{\rmub}^{T_{\rmub}}$ with $T_{\rmub}$ in $\{20,100\}$.

One can observe that in most instances, our SMF policy performs at least as good as SARSOP policy.  
As expected from Theorem~\ref{theo:milp:MDP_approx_equivalence_infinite}, we observe that $G_{\rmub}^{100} \geq G_{\rmub}^{20}$ for every instances. We could have considered larger horizon $T$ to compute tighter upper bound $\tildez_{\rmRc}^{T}$, but for the majority of instances, there is not much improvement for $T_{\rmub} > 100$.
In addition, we observe that by increasing the time horizon $T$ for policy $\bfdelta_{\rmSMF(T)}$, we obtain a higher expected reward. This was expected because the policy with a larger finite horizon considers mores steps in the future, which makes it more anticipative.

Table~\ref{tab:num:FSMBSPolicyInfiniteDisc} shows also that the average required computation time to decide the next action to take according to the SMF policy is never above a half second.  There is a large class of applied problems where this computation time seems reasonable (maintenance, robot navigation, etc.) 

To conclude, we observe that the value of SMF policy is in the worst case $20 \%$ below the optimal value $v^*$ for $60 \%$ of the instances in Table~\ref{tab:num:FSMBSPolicyInfiniteDisc}. It shows the quality of SMF policy on a large class of instances despite the limitations of the memoryless policies used in its computation. 


\begin{table}
\centering
\scalebox{0.7}
{
\begin{tabular}{c@{\qquad}rr@{\qquad\qquad}rr@{\qquad\qquad}rr@{\qquad\qquad}rr}
\toprule

Instances & \multicolumn{2}{p{2cm}}{SMF policy $T_{\rmr}=2$} & \multicolumn{2}{p{2cm}}{SMF policy $T_{\rmr}=5$} &  \multicolumn{2}{p{2cm}}{SARSOP} &  \multicolumn{2}{p{2cm}}{Upper bounds} \\
 
& $G^{100}$ & Time & $G^{100}$ & Time & $G^{100}$ & Time & $G_{\mathrm{ub}}^{100}$  & $G_{\mathrm{ub}}^{20}$   \\

\midrule

\texttt{4x3} & 98.5  & 0.04 & \textbf{98.5} & 0.1 & 98.6 & 0.002  & 8.2 & 4.9 \\
\texttt{4x4} & 62.2  & 0.03 & 60.6 & 0.07 & \textbf{58.9} & 0.002  & 4.3 & 2.3 \\
\texttt{4x5x2} & \textbf{37.8} & 0.09 & 39.7 & 0.6 & \textbf{37.8} & 0.002  & 14.5 & 8.5 \\
\texttt{1d.noisy} & 15.7 & 0.01 & \textbf{15.7}  & 0.03 & \textbf{15.7} & 0.005  & 9.7 & 5.4 \\
\texttt{1d} & 17.7 & 0.01 & \textbf{17.7} & 0.02 & 20.0 & 0.002 & 11.7 & 7.2 \\
\texttt{cheese} & 89.3 & 0.04 & 89.3 & 0.1 & \textbf{83.4} & 0.002  & 98.6 & 98.7 \\
\texttt{cheng.D3-1} & 0.2 & 0.01 & \textbf{0.2} & 0.2 & \textbf{0.2} & 0.05  & 7.4 & 4.6 \\
\texttt{cheng.D4-1} & 2.7 & 0.02 & \textbf{2.7} & 0.6 & 2.8 & 3.8  & 12.1 & 7.7 \\
\texttt{cheng.D5-1} & 1.3 & 0.02 & \textbf{1.3} & 0.4 & \textbf{1.3} & 3.5  & 11.7 & 7.3 \\
\texttt{ejs1} & 27.5 & 0.01 & \textbf{26.9} & 0.02 & \textbf{26.9} & 0.002  & 1.4 & 0.8 \\
\texttt{ejs2} & 6.7 & 0.01 & \textbf{6.7} & 0.03 & 6.8 & 0.1 & 20.0 & 12.5 \\
\texttt{ejs3} & 16.4 & 0.01 & \textbf{16.4} & 0.03 & \textbf{16.4} & 0.002 & 0.0 & 0.0 \\
\texttt{mini-hall2} & 97.2 & 0.04 & \textbf{93.0} & 0.1 & \textbf{93.0} & 0.002 & 5.8 & 3.1 \\
\texttt{marking} & 4.5 & 0.03 & 4.5 & 0.05 & \textbf{1.9} & 0.002  & 1.5 & 1.5 \\
\texttt{line4-2goals} & 0.5 & 0.01 & \textbf{0.5} & 0.02 & \textbf{0.5} & 0.002  & 2.6 & 2.6 \\
\texttt{mcc-example1} & 1.5 & 0.02 & \textbf{1.4} & 0.06 & 1.5 & 0.003  & 3.9 & 2.6 \\
\texttt{mcc-example2} & 1.5 & 0.02 & \textbf{1.4} & 0.06 & 1.5 & 0.003  & 3.9 & 2.6 \\
\texttt{paint} & 100.0 & 0.02 & 60.0 & 0.2 & \textbf{56.6} & 0.002  & 42.2 & 24.7 \\
\texttt{parr95} & 16.8 & 0.03 & 14.8 & 0.04 & \textbf{3.3} & 0.002  & 38.4 & 22.8 \\
\texttt{query.s2} & Opt. & 0.03 & \textbf{Opt.} & 0.04 & \textbf{Opt.} & 0.002  & 0.3 & 0.2 \\
\texttt{saci-s12-a6-z5.95} & 27.4 & 0.05 & \textbf{27.4} & 0.3 & \textbf{27.4} & 0.002  & 0.0 & 0.0 \\
\texttt{shuttle} & 67.4 & 0.02 & \textbf{67.4} & 0.05 & 202.3 & 0.001  & 0.0 & 0.0 \\
\texttt{stand-tiger} & 100.0 & 0.02 & 100.0 & 0.05 & \textbf{81.7} & 0.002  & 0.0 & 0.0 \\
\texttt{tiger} & 81.8 & 0.01 & 81.8 & 0.08 & \textbf{78.8} & 0.002 & 53.5 & 32.5 \\
\texttt{network} & \textbf{14.7} & 0.02 & 17.9 & 0.2 & 15.0 & 3.6  & 22.8 & 13.7 \\
\texttt{web-mall} & 59.4 & 0.01 & 59.4 & 0.1 & \textbf{59.1} & 0.003  & 51.6 & 31.4 \\
\texttt{web-ad} & Opt. & 0.02 & \textbf{Opt.} & 0.04 & \textbf{Opt.} & 0.002  & 0.0 & 0.0 \\
\texttt{aircraftID} & 6.0 & 0.05 & \textbf{6.0} & 0.3 & \textbf{6.0} & 0.002  & 0.0 & 0.0 \\

\bottomrule

\end{tabular} 
}
\caption{SMF policy performances on POMDP with discounted infinite horizon.}
\label{tab:num:FSMBSPolicyInfiniteDisc}
\end{table}

\section*{Acknowledgments}

We are grateful to Prof. Fr\'ed\'eric Meunier for his helpful remarks.
The authors gratefully acknowledge the financial support of the Operations Research and Machine Learning chair between Ecole des Ponts Paristech and Air France.

\newpage
\appendix


\section{Upper bound on the optimal history-dependent value}
\label{app:validInequalities}

In this appendix, we give a probabilistic interpretation of the valid inequalities of Section~\ref{sub:milp:valid_cuts} and the proofs of the results of Section~\ref{sec:milp}.

\subsection{Probabilistic interpretation of the valid inequalities}
\label{app:validInequalities:probaInterpretation}

We show in this section that the valid inequalities introduced in Section~\ref{sub:milp:valid_cuts} can be interpreted in terms of conditional independences that are stronger than the ones induced in NLP~\eqref{pb:milp:NLP_pomdp}.

Given a feasible solution $(\bfmu,\bfdelta)$ of the linear relaxation of~\eqref{pb:milp:MILP_pomdp}, $\bfmu$ can still be interpreted as the vector of moments of a probability distribution $\bbQ_{\bfmu}$ over 
$\left(\calS \times \calO \times \calA\right)^T \times \calS$.
However, as it has been mentioned above, the vector $\bfmu$ does not necessarily correspond to the vector of moments of $\bbP_{\bfdelta}$, which is due to the fact that $(\bfmu,\bfdelta)$ does not necessarily satisfy the nonlinear constraints~\eqref{eq:milp:NLP_indep_action}.
Besides, constraints~\eqref{eq:milp:NLP_indep_action} is equivalent to the property that, 
\begin{equation}\label{eq:milp:strongIndep}
\text{according to $\bbQ_{\bfmu}$, action $A_t$ is independent from state $S_t$ given observation $O_t$.} 
\end{equation}
Hence, given a feasible solution $(\bfmu,\bfdelta)$ of the linear relaxation of MILP~\eqref{pb:milp:MILP_pomdp}, the distribution $\bbQ_{\bfmu}$ does not necessarily satisfy the conditional independences~\eqref{eq:milp:strongIndep}.
Remark that~\eqref{eq:milp:strongIndep} implies the weaker result that,
\begin{equation}\label{eq:milp:weakIndep}
    \text{according to $\bbQ_{\bfmu}$, $A_t$ is independent from $S_t$ given $O_t$, $A_{t-1}$ and $S_{t-1}$.}
\end{equation}
Proposition~\ref{prop:milp:valid_cuts_pomdp} says that the independences in~\eqref{eq:milp:weakIndep} are not satisfied in general by a feasible solution $(\bfmu,\bfdelta)$ of the linear relaxation of MILP~\eqref{pb:milp:MILP_pomdp}, but that we can enforce them using linear inequalities~\eqref{eq:milp:Valid_cuts_pomdp} on $(\bfmu,\bfdelta)$ in an extended formulation.

\subsection{Proofs of Sections~\ref{sec:milp}}
\label{app:validInequalities:proofs}

In this section, we give the proofs of Theorems~\ref{theo:milp:NLP_optimal_solution},~\ref{theo:milp:MDP_approx_equivalence_infinite} an Proposition~\ref{prop:milp:valid_cuts_pomdp}.
In all the proofs, we omit the conditioning of the probabilities on the event ${S_0 \sim \bfb}$.

\NLPoptimalSolution*

\begin{proof}[Proof of Theorem~\ref{theo:milp:NLP_optimal_solution}]
    Let $(\bfmu, \bfdelta)$ be a feasible solution of Problem~\eqref{pb:milp:NLP_pomdp}. 
    We prove by induction on $t$ that $\mu_s^0 = \bbP_{\bfdelta}\big(S_0 = s\big)$, $\mu_{soa}^t=\bbP_{\bfdelta}\big(S_t = s, O_t=o, A_t=a \big)$ and $\mu_{sa}^t =\bbP_{\bfdelta}\big(S_t = s, A_t=a \big)$ for $t$ in $[T]$. 

    First, note that at time $t=0$, Constraints~\eqref{eq:milp:NLP_initial_policy} ensure that $\delta_{a|o}^0$ does not depend on $o$, i.e., $\delta_{a|o}^0 = \bbP_{\bfdelta}(A_0=a|O_0=o) = \bbP_{\bfdelta}(A_0=a)$.
    At time $t=0$, the statement is true because
    \begin{align*}
        & \mu_s^0 = \bfb(s) = \bbP_{\bfdelta}\left(S_0 =s\right) & \\
        & \mu_{soa}^0 = \delta^0_{a|o} p(o|s) \mu_s^0 =  \bbP_{\bfdelta}\left(A_0=a | O_0 = o\right) \bbP_{\bfdelta}\left(O_0=o | S_0 =s\right) \bbP_{\bfdelta}\left(S_0 =s\right) \\
        & \mu_{sa}^0 = \sum_{o \in \calO} \mu_{soa}^0 = \sum_{o \in \calO} \bbP_{\bfdelta}\left(S_0=s,O_0=o,A_0=a\right) = \bbP_{\bfdelta}\left(S_0=s,A_0=a\right) 
    \end{align*}
    Suppose that the induction hypothesis holds up to time $t-1$. 
    Then, Constraints~\eqref{eq:milp:NLP_consistency_s} and the induction hypothesis ensure that:
    \begin{align*}
        \mu_s^{t} &= \sum_{s' \in \calS, a' \in \calA} p(s|s',a')\mu_{s'a'}^{t-1} = \sum_{s' \in \calS, a' \in \calA} p(s|s',a') \bbP_{\bfdelta}\left(S_{t-1}=s',A_{t-1}=a' \right) \\
                    & = \sum_{s' \in \calS, a' \in \calA} \bbP_{\bfdelta}\left(S_{t-1}=s',A_{t-1}=a',S_{t}=s\right)\\
                    &= \bbP_{\bfdelta} \left(S_{t} = s\right),
    \end{align*}
    which shows that~\eqref{eq:milp:def_moments_s} holds.

    By combining Constraints~\eqref{eq:milp:NLP_indep_action} and the induction hypothesis, we obtain that 
    \begin{align*}
    \mu_{soa}^{t} &= \delta_{a|o}^t p(o|s) \mu_{s}^t \\
    &= \bbP_{\bfdelta}\left(A_t=a | O_t =o\right) \bbP_{\bfdelta}\left(O_t=o | S_t =s\right) \bbP_{\bfdelta}\left(S_t =s\right) \\
    & = \delta_{a|o}^t p(o|s) \bbP_{\bfdelta}\left(S_t=s \right) \\
    &= \bbP_{\bfdelta}\left(S_t=s, O_t=o, A_t=a \right),
    \end{align*}
    where the last equality comes from the conditional independence and the law of total probability.
    It shows that~\eqref{eq:milp:def_moments_soa} holds. 
    Finally, Constraints~\eqref{eq:milp:NLP_consistency_sa} ensure that
    \begin{align*}
        \mu_{sa}^t = \sum_{o \in \calO} \mu_{soa}^t 
        &= \sum_{o \in \calO} \bbP_{\bfdelta}^t \left( S_t=s,O_t=o,A_t=a\right) \\
        &= \bbP_{\bfdelta}^t \left( S_t=s,A_t=a\right),
    \end{align*}
    which shows that~\eqref{eq:milp:def_moments_sa} holds.
    Consequently, for any feasible solution $(\bfmu,\bfdelta)$ of Problem~\eqref{pb:milp:NLP_pomdp},
    \begin{align*}
        \sum_{t=0}^T \sum_{\substack{s,s' \in \calS \\ a \in \calA}} r_t(s,a) \mu_{sa}^t
        = \sum_{t=0}^T \sum_{\substack{s,s' \in \calS \\ a \in \calA}} r_t(s,a) \bbP_{\bfdelta}\left(S_t=s,A_t=a\right) &= \bbE_{\bfdelta} \bigg[ \sum_{t=0}^{T}r_t(S_t,A_t) | \bfb \bigg] \\
                        &= \sum_{a \in \calA} \delta_{a|o}^0 \bbE_{\bfdelta} \bigg[ \sum_{t=0}^{T}r_t(S_t,A_t) | \bfb, a \bigg] 
    \end{align*}
    for any element $o$ in $\calO$.
    Hence, it follows that $(\bfmu,\bfdelta)$ is an optimal solution of NLP~\eqref{pb:milp:NLP_pomdp} if, and only if:
    \begin{align*}
        &\bfdelta \in \argmax \sum_{a \in \calA} \delta_{a|o}^0 \bbE_{\bfdelta} \bigg[ \sum_{t=0}^{T}r_t(S_t,A_t) | \bfb, a \bigg]  \iff  \bfdelta \in \argmax_{\bfdelta \in \Deltaml^T, a \in \calA} \bbE_{\bfdelta} \bigg[ \sum_{t=0}^{T}r_t(S_t,A_t) | \bfb, a \bigg],
    \end{align*}
    which means that $\bfdelta$ is an optimal solution of Problem~\eqref{pb:POMDPmlfiniteHorizon}
    Consequently, $w_{\rmml}^{T}(\bfb) = z_{\rmml}^{T}(\bfb)$. It achieves the proof.
\end{proof}


\ValidInequalities*

\begin{proof}[Proof of Proposition~\ref{prop:milp:valid_cuts_pomdp}]
    Let $(\bfmu,\bfdelta)$ be a feasible solution of Problem~\eqref{pb:milp:MILP_pomdp}. We define
    $$\mu_{s'a'soa}^t = \delta^t_{a|o}p(o|s)p(s|s',a')\mu_{s'a'}^{t-1}$$ for all $(s',a',s,o,a) \in \calS \times \calA \times \calS \times \calO \times \calA$, $t \in [T]$. 
    These new variables satisfy constraints in \eqref{eq:milp:Valid_cuts_pomdp} :
    \begin{align*}
        \sum_{a \in \calA} \mu_{s'a'soa}^t 
        &= \left(\sum_{a \in \calA} \delta^t_{a|o}\right)p(o|s)p(s|s',a')\mu_{s'a'}^{t-1} = p(o|s)p(s|s',a')\mu_{s'a'}^{t-1}\\
        \sum_{a' \in \calA, s' \in \calS} \mu_{s'a'soa}^t &= \left(\sum_{a' \in \calA, s' \in \calS} p(s|s',a')\mu_{s'a'}^{t-1}\right) \delta^t_{a|o} p(o|s) = \delta^t_{a|o} p(o|s) \mu_{s}^t
    \end{align*}

    \noindent The remaining constraint \eqref{eq:milp:Valid_cuts_pomdp_main} is obtained using the following observation :
    \begin{align*}
        \frac{\mu_{s'a'soa}^t}{\sum_{s'' \in \calS} \mu_{s'a's''oa}^t} 
        &= \frac{p(o|s)p(s|s',a')\mu_{s'a'}^{t-1}}{\sum_{s'' \in \calS} p(o|s'')p(s''|s',a')\mu_{s'a'}^{t-1}} =  \frac{p(o|s)p(s|s',a')\sum_{\ovs} \mu_{s'a'\ovs}^{t-1}}{\sum_{s'' \in \calS} p(o|s'')p(s''|s',a')\sum_{\ovs} \mu_{s'a'\ovs}^{t-1}} \\ 
        &=  \frac{\displaystyle p(o|s)p(s|s',a')}{\displaystyle \sum_{s'' \in \calS} p(o|s'')p(s''|s',a')}
    \end{align*}
    By setting $p(s|s',a',o) = \frac{\displaystyle p(o|s)p(s|s',a')}{\displaystyle \sum_{\overline{s} \in \calS} p(o|\overline{s})p(\overline{s}|s',a')}$, equality \eqref{eq:milp:Valid_cuts_pomdp_main} holds. If $\sum_{s'' \in \calS} \mu_{s'a's''oa}^t = 0$, then $\mu_{s'a'soa}^t=0$ and constraint~\eqref{eq:milp:Valid_cuts_pomdp_main} is satisfied.

    Now we prove that there exists a solution $\bfmu$ of the linear relaxation of MILP~\eqref{pb:milp:MILP_pomdp} that does not satisfy inequalities~\eqref{eq:milp:Valid_cuts_pomdp}. We define such a solution $(\bfmu,\bfdelta)$:
    \begin{align}
        &\mu_s^0 = \bfb(s) \label{eq:milp:proof_mu_s_init} \\
        &\mu_{soa}^t = \begin{cases}
                                &p(o|s)\mu_s^t, \ \text{if} \ a = \phi(s) \label{eq:milp:proof_mu_soa}\\
                                &0,\  \text{otherwise} 
                            \end{cases}, & \text{if \ $t\geq 0$}, \\
        &\mu_{sa}^{t} =  \sum_{o \in \calO} \mu_{soa}^t & \label{eq:milp:proof_mu_sa}\\
        &\mu_{s}^t = \sum_{s'\in \calS, a' \in \calA} p(s|s',a')\mu_{s'a'}^{t-1}, & \text{if} \ t \geq 1, \label{eq:milp:proof_mu_s} \\
        &\delta_{a|o}^t = \begin{cases}
                                &\frac{\sum_{s \in \calS}\mu_{soa}^t}{\sum_{s \in \calS, a \in \calA} \mu_{soa}^{t}} \ \text{if} \ \sum_{s \in \calS, a \in \calA} \mu_{soa}^{t} \neq 0 \label{eq:milp:proof_delta}\\
                                &\mathds{1}_{\tilde{a}}(a),\  \text{otherwise} 
                            \end{cases} &
    \end{align}
    where $\phi : \calS \rightarrow \calA$ is an arbitrary mapping and $\tilde{a}$ is an arbitrary element in $\calA$. We prove that $\bfmu$ is a feasible solution of the linear relaxation of MILP~\eqref{pb:milp:MILP_pomdp}. 

    First, Constraints~\eqref{eq:milp:NLP_state_initial}-\eqref{eq:milp:NLP_consistency_s} are satisfied because of the definition $\bfmu$. Indeed, Equalities~\eqref{eq:milp:proof_mu_s_init}-\eqref{eq:milp:proof_mu_s} exactly correspond to Constraints~\eqref{eq:milp:NLP_state_initial}-\eqref{eq:milp:NLP_consistency_s}.

    Second, it remains to prove that constraints ~\eqref{eq:milp:MILP_McCormick_1},~\eqref{eq:milp:MILP_McCormick_2}, ~\eqref{eq:milp:MILP_McCormick_3} are satisfied.
    Inequality~\eqref{eq:milp:MILP_McCormick_1} is satisfied because
    \begin{align*}
        \mu_{soa}^t \leq \max\left(0,p(o|s)\mu_{s}^{t}\right) \leq p(o|s) \mu_{s}^{t},
    \end{align*}
    Inequality~\eqref{eq:milp:MILP_McCormick_2} is satisfied because
    \begin{align*}
        \mu_{soa}^t \leq \sum_{s' \in \calS}\mu_{s'oa}^t &= \delta_{a|o}^t \sum_{s' \in \calS,a' \in \calA} \mu_{s'oa'}^t \\
                        &= \delta_{a|o}^t \sum_{s' \in \calS,a' \in \calA} \mathds{1}_{\phi(s)}(a') p(o|s')\mu_{s'}^t \\
                        &= \delta_{a|o}^t \underbrace{\sum_{s' \in \calS}p(o|s')\mu_{s'}^t}_{\leq 1} \leq \delta_{a|o}^t 
    \end{align*}
    where we used definition~\eqref{eq:milp:proof_delta} for the first equality.
    Inequality, \eqref{eq:milp:MILP_McCormick_3} is satisfied because
    \begin{align*}
        \mu_{soa}^t - p(o|s) \mu_{s}^{t} & \geq \sum_{s' \in \calS} \overbrace{\mu_{s'oa}^t - p(o|s') \mu_{s'}^{t}}^{\leq 0} \\
        &=  \delta_{a|o}^t \sum_{s' \in \calS}  p(o|s') \mu_{s'}^t - \sum_{s' \in \calS}  p(o|s') \mu_{s'}^{t}   \\
        &= \sum_{s' \in \calS}  p(o|s') \mu_{s'}^t (\delta_{a|o}^t - 1) \\
        &\geq \delta_{a|o}^t - 1,
    \end{align*}
    which yields ~\eqref{eq:milp:MILP_McCormick_3}.
    Therefore, $(\bfmu,\bfdelta)$ is a solution of the linear relaxation of MILP~\eqref{pb:milp:MILP_pomdp}.

    Now, we prove that such a solution does not satisfy inequalities~\eqref{eq:milp:Valid_cuts_pomdp}. We define the new variables:
    \begin{align*}
        \mu_{s'a'soa}^t = \begin{cases}
                            & \mu_{s'a'}^{t-1} p(s|s',a')\frac{\mu_{soa}^t}{\sum_{o'\in \calO,a' \in \calA}\mu_{so'a'}^t} \ \text{if} \ \sum_{o\in \calO,a \in \calA}\mu_{soa}^t \neq 0 \\
                            & 0 \ \text{otherwise}
                            \end{cases}
    \end{align*}
    Hence, $\bfmu$ satisfies constraints~\eqref{eq:milp:Valid_cuts_pomdp_consistency1} and \eqref{eq:milp:Valid_cuts_pomdp_consistency2}. However, constraint~\eqref{eq:milp:Valid_cuts_pomdp_main} is not satisfied in general. 
    Indeed, since the mapping $\phi$ is arbitrary, we can set $\phi$ such that $p(s|s',a',o) >0$ and $\mu_{s'a'soa}^t = 0$.
    Therefore, there exists a solution $\bfmu$ of the linear relaxation of MILP~\eqref{pb:milp:MILP_pomdp} that does not satisfy inequalities~\eqref{eq:milp:Valid_cuts_pomdp}. It achieves the proof.
\end{proof}

\MDPequivalence*

\begin{proof}[Proof of Theorem~\ref{theo:milp:MDP_approx_equivalence_finiteHorizon}]
    Let $\bfb$ in $\calB$ and $T$ in $\bbZ_{+}$.
    First, we prove the equivalence between the linear relaxation of our MILP~\eqref{pb:milp:MILP_pomdp} and the MDP approximation with finite horizon $T$. To do so, we use the dual formulation of~\citet{Epenoux1963} that solves MDP problem on finite horizon
    \begin{subequations}\label{pb:app_proof:LP_mdp}
        \begin{alignat}{2}
            \max_{\bfmu, \bfdelta}  \enskip & \sum_{t=0}^T \sum_{\substack{s \in \calS,a \in \calA}} r_t(s,a) \mu_{sa}^t  & \quad & \label{eq:app_proof:LP_obj_function}\\
            \mathrm{s.t.} \enskip
             & \mu_{s}^{0} = \bfb(s) &  \quad  \forall s \in \calS \label{eq:app_proof:LP_state_initial} \\
             & \sum_{a \in \calA} \mu_{sa}^t = \mu_s^{t} &  \quad  \forall s \in \calS, t \in [T] \label{eq:app_proof:LP_consistency_s} \\
             & \mu_s^{t+1} = \sum_{s' \in \calS, a' \in \calA} p(s|s',a')\mu_{s'a'}^t &  \quad  \forall s \in \calS, t \in [T-1] \label{eq:app_proof:LP_consistency_s} \\
             & \bfmu \geq 0 \label{eq:app_proof:LP_constraint_policy} 
        \end{alignat}
    \end{subequations}
    Hence, it suffices to prove that linear relaxation of MILP~\eqref{pb:milp:MILP_pomdp} is equivalent to Linear program~\eqref{pb:app_proof:LP_mdp}.
    Note that the two objective functions are the same. Hence, we only need to prove that we can construct a feasible solution from a problem to another.
    
    Let $(\bfmu,\bfdelta)$ be a feasible solution of the linear relaxation of MILP~\eqref{pb:milp:MILP_pomdp}. 
    Constraints~\eqref{eq:milp:NLP_state_initial}-\eqref{eq:milp:NLP_consistency_s} ensure that $(\mu_s^1,\mu_{sa}^t)_{t \in [T]}$ is a feasible solution of Linear program~\eqref{pb:app_proof:LP_mdp}.

    Let $\bfmu$ be a feasible solution of Linear program~\eqref{pb:app_proof:LP_mdp}. It suffices to define variables $\delta_{a|o}^t$ and $\mu_{soa}^t$ for all $a$ in $\calA$, $o$ in $\calO$, $s$ in $\calS$, and $t$ in $[T]$. We define these variables using ~\eqref{eq:milp:proof_mu_soa} and~\eqref{eq:milp:proof_delta}. 
    In the proof of Proposition~\ref{prop:milp:valid_cuts_pomdp}, we proved that $(\bfmu,\bfdelta)$ is a feasible solution of the linear relaxation of MILP~\eqref{pb:milp:MILP_pomdp}.  
    Consequently, the equivalence holds and $z_{\rmR}^{T}(\bfb) = v_{\rmMDP}^{T}(\bfb)$.

    Now we prove that inequalities~\eqref{eq:milp:inequality_information} hold.
    Note that Proposition~\eqref{prop:milp:valid_cuts_pomdp} ensures that
    $$w_{\rmml}^{T}(\bfb) \leq z_{\rmRc}^{T}(\bfb) \leq z_{\rmR}^{T}(\bfb).$$
    It remains to prove the two following inequalities.
    \begin{align}
        &w_{\rmml}^{T}(\bfb) \leq v^{T}(\bfb) \label{eq:milp:ineq1}\\
        &v^{T}(\bfb) \leq z_{\rmRc}^{T}(\bfb) \label{eq:milp:ineq2}
    \end{align}

    First, we prove Inequality~\eqref{eq:milp:ineq1}. By definition, we have $\Deltaml^T \subseteq \Deltahis^T$.
    Hence, we obtain $w_{\rmml}^{T}(\bfb) \leq v^{T}(\bfb)$. Theorem~\ref{theo:milp:NLP_optimal_solution} ensures that $w_{\rmml}^{T}(\bfb) \leq v^{T}(\bfb)$.
    Therefore the inequality $w_{\rmml}^{T}(\bfb) \leq v^{T}(\bfb) \leq z_{\rmR}^{T}(\bfb)$ holds.

    Now we prove Inequality~\eqref{eq:milp:ineq2}.
    The proof is based on a probabilistic interpretation of the valid inequalities~\eqref{eq:milp:Valid_cuts_pomdp}. It suffices to proves that for any policy $\bfdelta$ in $\Deltahis^T$, the probability distribution $\bbP_{\bfdelta}$ satisfies the weak conditional independences~\eqref{eq:milp:weakIndep}.
    Let $\bfdelta \in \Deltahis^T$. The probability distribution $\bbP_{\bfdelta}$ over the random variables $(S_t,A_t,O_t)_{0\leq t \leq T}$ according to $\bfdelta$ is exactly
    \begin{align}\label{eq:milp:proof_distrib}
        \bbP_{\bfdelta} (\left(S_t=s_t,O_t=o_t,A_t=a_t \right)_{0\leq t \leq T}) &= \bbP_{\bfdelta}(S_0=s_0)\prod_{t=0}^T \bbP_{\bfdelta}(S_{t+1}=s_{t+1}|S_t=s_t,A_t=a_t) \notag \\
        &\bbP_{\bfdelta}(O_t=o_t|S_t=s_t) \delta^t_{a_t|h_t}
    \end{align}
    where $h_t = \{O_0=o_0,A_0=a_0,O_2=o_2,\ldots, O_t=o_t\}$ is the history of observations and actions. Note that the policy at time $t$ is the conditional probability $\delta^t_{a_t|h_t} = \bbP_{\bfdelta}(A_t=a_t|H_t=h_t)$.
    We define:
    \begin{align*}
        &\mu_{s}^0 = \bbP_{\bfdelta}(S_0=s)\\
        &\mu_{sa}^t = \bbP_{\bfdelta}(S_t=s,A_t=a)\\
        &\mu_{soa}^t = \bbP_{\bfdelta}(S_t=s,O_t=o,A_t=a)\\
        &\mu_{s'a'soa}^t = \bbP_{\bfdelta}(S_{t-1}=s',A_{t-1}=a',S_t=s,O_t=o,A_t=a)
    \end{align*}
    We define the policy $\tilde{\bfdelta}$ using~\eqref{eq:milp:proof_delta}.
    
    We have already proved that Constraints~\eqref{eq:milp:NLP_state_initial}-\eqref{eq:milp:NLP_consistency_s} and~\eqref{eq:milp:McCormick_linearization} are satisfied.
    Furthermore, we have $\tilde{\bfdelta} \in \Deltaml^T$.
    Finally, we prove that equalities~\eqref{eq:milp:Valid_cuts_pomdp} are satisfied. By definition of a probability distribution, we directly see that constraints~\eqref{eq:milp:Valid_cuts_pomdp_consistency1} are satisfied.
    We prove \eqref{eq:milp:Valid_cuts_pomdp_consistency2} and \eqref{eq:milp:Valid_cuts_pomdp_main}. We compute the left-hand side of \eqref{eq:milp:Valid_cuts_pomdp_consistency2}:
    \begin{align*}
        &\sum_{a \in \calA} \mu_{s'a'soa}^t = \sum_{a \in \calA} \bbP_{\bfdelta}(S_{t-1}=s',A_{t-1}=a',S_t=s,O_t=o,A_t=a)\\
        &= \sum_{a \in \calA} \sum_{\substack{s_0,\ldots,s_{t-2} \\ h_{t-1}}} \\
        &\bbP_{\bfdelta}((S_i=s_i,O_i=o_i,A_i=a_i)_{0\leq i\leq t-2},S_{t-1}=s',O_{t-1}=o',A_{t-1}=a',S_t=s,O_t=o,A_t=a)\\
        &= p(o|s)p(s|s',a')\sum_{\substack{s_0,\ldots,s_{t-2} \\ h_{t-1}}} \bbP_{\bfdelta}((S_i=s_i,O_i=o_i,A_i=a_i)_{0\leq i\leq t-2},S_{t-1}=s',O_{t-1}=o_{t-1},A_{t-1}=a') \\
        & \sum_{a \in \calA} \delta_{a|h_t} \\
        &= p(o|s)p(s|s',a')\sum_{\substack{s_0,\ldots,s_{t-2} \\ h_{t-1}}} \bbP_{\bfdelta}((S_i=s_i,O_i=o_i,A_i=a_i)_{0\leq i\leq t-2},S_{t-1}=s',O_{t-1}=o',A_{t-1}=a')\\
        &=  p(o|s)p(s|s',a') \bbP_{\bfdelta}(S_{t-1}=s',A_{t-1}=a')\\
        &=  p(o|s)\mu_{s'a's}^{t-1}
    \end{align*}
    where we used the definition of the probability distribution~\eqref{eq:milp:proof_distrib} at the third equation. Therefore, constraints~\eqref{eq:milp:Valid_cuts_pomdp_consistency2} are satisfied by $\bfmu$. 
    To prove that constraints~\eqref{eq:milp:Valid_cuts_pomdp_main} are satisfied, we prove that
    $$\bbP_{\bfdelta}(S_t=s_t| S_{t-1}=s_{t-1},A_{t-1}=a_{t-1},O_t=o_t,A_t=a_t) = \bbP_{\bfdelta}(S_t=s_t|  S_{t-1}=s_{t-1},A_{t-1}=a_{t-1},O_t=o_t)$$
    We compute $\bbP_{\bfdelta}(S_t=s_t| S_{t-1}=s',A_{t-1}=a',O_t=o,A_t=a)$:
    \begin{align*}
        &\bbP_{\bfdelta}(S_t=s_t| S_{t-1}=s_{t-1},A_{t-1}=a_{t-1},O_t=o_t,A_t=a_t) \\
        &= \frac{\bbP_{\bfdelta}(S_{t-1}=s_{t-1},A_{t-1}=a_{t-1},S_t=s_t,O_t=o_t,A_t=a_t)}{\bbP_{\bfdelta}(S_{t-1}=s_{t-1},A_{t-1}=a_{t-1},O_t=o_t,A_t=a_t)}\\
        &= \frac{\sum_{\substack{s_0,\ldots,s_{t-2} \\ h_{t-1}}} \bbP_{\bfdelta}((S_i=s_i,O_i=o_i,A_i=a_i)_{0\leq i\leq t})}{\sum_{\substack{s_0,\ldots,s_{t-2},s_t \\ h_{t-1}}} \bbP_{\bfdelta}((S_i=s_i,O_i=o_i,A_i=a_i)_{0\leq i\leq t})} \\
        &= \frac{\sum_{\substack{s_0,\ldots,s_{t-2} \\ h_{t-1}}} \delta_{a_t|h_t}^t p(o_t|s_t)p(s_{t}|s_{t-1},a_{t-1}) \bbP_{\bfdelta}((S_i=s_i,O_i=o_i,A_i=a_i)_{0\leq i\leq t-1})}{\sum_{\substack{s_0,\ldots,s_{t-2},s_t' \\ h_{t-1}}} \delta_{a_t|h_t}^t p(o_t|s_t)p(s_{t}|s_{t-1},a_{t-1}) \bbP_{\bfdelta}((S_i=s_i,O_i=o_i,A_i=a_i)_{0\leq i\leq t-1})}
    \end{align*}
    \begin{align*}
        &= \frac{p(o_t|s_t)p(s_{t}|s_{t-1},a_{t-1}) \sum_{\substack{s_0,\ldots,s_{t-2} \\ h_{t-1}}} \delta_{a_t|h_t}^t \bbP_{\bfdelta}((S_i=s_i,O_i=o_i,A_i=a_i)_{0\leq i\leq t-1})}{\sum_{s_t'} p(o_t|s_t')p(s_{t}'|s_{t-1},a_{t-1})\sum_{\substack{s_0,\ldots,s_{t-2} \\ h_{t-1}}} \delta_{a_t|h_t}^t \bbP_{\bfdelta}((S_i=s_i,O_i=o_i,A_i=a_i)_{0\leq i\leq t-1})} \\
        &= \frac{p(o_t|s_t)p(s_{t}|s_{t-1},a_{t-1})}{\sum_{s_t'} p(o_t|s_t')p(s_{t}'|s_{t-1},a_{t-1})}
    \end{align*}
    where the last line goes from the fact that the term $\delta_{a_t|h_t}^t \bbP_{\bfdelta}((S_i=s_i,O_i=o_i,A_i=a_i)_{0\leq i\leq t-1})$ does not depend on $s_t$.
    Hence, constraints~\eqref{eq:milp:Valid_cuts_pomdp_main} are satisfied by $\bfmu$. We deduce that $\bfmu$ is a feasible solution of MILP~\eqref{pb:milp:MILP_pomdp} satisfying the valid inequalities~\eqref{eq:milp:Valid_cuts_pomdp}.
    Therefore,
    \begin{align*}
        \bbE_{\bfdelta} \left[ \sum_{t=0}^T r_t(S_t,A_t) | S_0 \sim \bfb \right] &= \sum_{t=0}^T\sum_{s,a}\bbP_{\bfdelta}(S_t=s,A_t=a)r_t(s,a) \leq z_{\rmRc}^{T}(\bfb)
    \end{align*} 
    By maximizing over $\bfdelta$ the left-hand side, we obtain $v^{T} \leq z_{\rm{R}^{\rm{c}}}^{T}$. It achieves the proof.
\end{proof}

\begin{proof}[Proof of Proposition~\ref{prop:computingSMFApproximationWithFiniteHorizon}]
    We need to prove that Problem~\eqref{eq:memorylessFromBeliefValueFunction} is equivalent to Problem~\eqref{pb:POMDPmlfiniteHorizon}. 
    The proof relies on the fact that solving Problem~\eqref{eq:memorylessFromBeliefValueFunction} is equivalent to solving a POMDP with memoryless policies over $T$ time steps
    \begin{align*}
         \hat{v}_{\rmml}^T(\bfb)&:=  \max_{a' \in \calA}\max_{\bfdelta \in \Deltaml^{T}} \bbE_{\bfdelta}\left[ \sum_{t=0}^{T} \gamma^{t}r(S_{t},A_{t}) + \gamma^{T+1}v_{\rmMDP}(S_{T+1}) \Big| S_0 \sim \bfb, A_0=a'\right]\\
         &= \max_{a' \in \calA} \max_{\substack{\bfdelta \in \Deltaml^{T}\colon \\ \delta_{a'|o}^0=1 \forall o \in \calO}} \bbE_{\bfdelta}\left[ \sum_{t=0}^{T} \gamma^{t}r(S_{t},A_{t}) + \gamma^{T+1}v_{\rmMDP}(S_{T+1}) \Big| S_0 \sim \bfb, A_0=a' \right] \\
         &= \max_{a' \in \calA} \max_{\substack{\bfdelta \in \Deltaml^{T}\colon \\ \delta_{a'|o}^0=1 \forall o \in \calO}} \bbE_{\bfdelta}\left[ \sum_{t=0}^{T} \gamma^{t}r(S_{t},A_{t}) + \gamma^{T+1}v_{\rmMDP}(S_{T+1}) \Big| S_0 \sim \bfb, A_0=a' \right]\\
         &= \max_{\substack{\bfdelta \in \Deltaml^{T}\colon \\ \delta_{a|o}^0=\delta_{a|o'}^0 \\ \forall o,o' \in \calO, \forall a \in \calA}} \bbE_{\bfdelta}\left[ \sum_{t=0}^{T} \gamma^{t}r(S_{t},A_{t}) + \gamma^{T+1}v_{\rmMDP}(S_{T+1}) \Big| \bfb\right]
    \end{align*}
    We introduce the reward function $\tilde{r}_t \colon \calS \times \calA \rightarrow \bbR$ such that $\tilde{r}_t(s,a) = \gamma^t r(s,a)$ if $t < T$ and $\tilde{r}_{T}(s,a) = \gamma^T r(s,a) + \gamma^{T+1}\sum_{s'} p(s'|s,a) v_{\rmMDP}(s')$, for all $s$ in $\calS$ and $a$ in $\calA$. Then, it follows that the last line corresponds to a memoryless POMDP over a finite horizon $T$, the reward function $\tilde{r}$ and the initial probability distribution is the belief state $\bfb$. 
    Theorem~\ref{theo:milp:NLP_optimal_solution} combined with McCormick's linearization~\eqref{eq:milp:McCormick_linearization} ensures that the optimization problem can be written:
    \begin{subequations}
        \begin{alignat*}{2}
            \hat{v}_{\rmml}^T(\bfb) = \max_{\bfmu,\bfdelta}  \enskip & \sum_{t=0}^T \sum_{\substack{s \in \calS, a \in \calA}} \tilder_t(s,a)\mu_{sa}^t   & \quad &\\
            \mathrm{s.t.} \enskip 
            & \delta_{a|o}^0 = \delta_{a|o'}^0 & \forall a \in \calA, o,o' \in \calO \\
            & \mu_s^0 = b(s) & \forall s \in \calS  \\
            & (\bfmu,\bfdelta) \ \mathrm{satisfies}~\eqref{eq:milp:NLP_state_initial}-\eqref{eq:milp:NLP_consistency_s},~\eqref{eq:milp:McCormick_linearization}
        \end{alignat*}
    \end{subequations}
    The dependence of the reward in time does not impact the result of Theorem~\ref{theo:milp:NLP_optimal_solution}. It achieves the proof.
\end{proof}

\MDPequivalenceInfinitehorizon*

\begin{proof}[Proof of Theorem~\ref{theo:milp:MDP_approx_equivalence_infinite}]
    Let $\bfb$ be a belief in $\calB$ and $T,T'$ in $\bbZ_{+}$.
    We recall that the MILPs and LPs are solved on the POMDP instance specified by the reward function $\tilder$.
    We start by proving Inequality~\eqref{eq:milp:inequality_information}.
    First, since $\bfdelta_{\rmSMF}$ is a history-dependent policy, the first inequality $v_{\rmSMF(T)}(\bfb) \leq v^*(\bfb)$ holds.

    Second, the inequality $\tildez_{\rmRc}^{T}(\bfb) \leq \tildez_{\rmR}^{T}(\bfb)$ holds because the left-hand side is obtained by using more constraints in the MILP solved.

    Finally it suffices to prove the inequality $v^*(\bfb) \leq \tildez_{\rmRc}^{T'}(\bfb)$. To do so, we prove that $v^*(\bfb) \leq v^{T'}(\bfb)$. Then, Theorem~\ref{theo:milp:MDP_approx_equivalence_infinite} will ensure that $v^{T'}(\bfb) \leq \tildez_{\rmRc}^{T'}(\bfb)$, which will achieve the proof. 
    Given an history-dependent policy $\bfdelta$, we denote by $\Pi_{\bfdelta}^{t}(\bfb_0)$ the marginal probability distribution of state $S_{t}$ according to policy $\bfdelta$. For all $s$ in $\calS$,
    \begin{align*}
        \Pi_{\bfdelta}^{t}(\bfb)_s = \sum_{\substack{s_0,\ldots,s_{t-1} \in \calS \\ o_1,\ldots,o_{t-1} \in \calO \\  a_0,\ldots,a_{t-1} \in \calA}} \bfb(s_0) \delta_{a_0}^0 p(s_{1}|s_{0},a_{0}) \prod_{t'=1}^{t-1} p(o_{t'}|s_{t'})\delta_{a_{t'}|h_{t'}} p(s_{t'+1}|s_{t'},a_{t'}),
    \end{align*}
    where the initial policy $\delta^0$ does not depend on the initial observation because this information is contained in the initial belief $\bfb$.

    Now we rewrite $v^*(\bfb)$ using $\Pi_{\bfdelta}$:
    \begin{align*}
        v^*(\bfb) &= \max_{\bfdelta \in \Deltahis} \bbE_{\bfdelta}\left[\sum_{t=0}^{\infty} \gamma^t r(S_t,A_t) | \bfb \right] \\
        &= \max_{\bfdelta \in \Deltahis} \bbE_{\bfdelta}\left[\sum_{t=0}^{T'} \gamma^t r(S_t,A_t) | \bfb \right] + \bbE_{\bfdelta}\left[\sum_{t=T+1}^{\infty} \gamma^t r(S_t,A_t) | \bfb \right]  \\
        &= \max_{\bfdelta \in \Deltahis} \bbE_{\bfdelta}\left[\sum_{t=0}^{T'} \gamma^t r(S_t,A_t) | \bfb \right] + \gamma^{T'+1} \sum_{s\in \calS} \Pi_{\bfdelta}^{T'+1}(\bfb)_s \underbrace{\bbE_{\bfdelta}\left[\sum_{t=0}^{\infty} \gamma^t r(S_t,A_t) | S_0=s \right]}_{\leq v_{\rmMDP}(s)} \\
        &\leq \max_{\bfdelta \in \Deltahis^{T'}} \bbE_{\bfdelta}\left[\sum_{t=0}^{T'} \gamma^t r(S_t,A_t) + \gamma^{T'+1} v_{\rmMDP}(S_{T'+1})  | \bfb \right]= v^{T'}(\bfb)
    \end{align*}

    Now we prove Inequality~\eqref{eq:milp:decreasing_bounds}. To do so we prove that any feasible solution of the linear relaxation of MILP~\eqref{pb:milp:MILP_pomdp} with finite horizon $T+1$ is a feasible solution of the linear relaxation of MILP~\eqref{pb:milp:MILP_pomdp} with finite horizon $T$.
    We denote respectively by $\rmPc^{T}$ and $\rmPc^{T+1}$ the feasible set of linear relaxation of MILP~\eqref{pb:milp:MILP_pomdp} with finite horizon $T$ and $T+1$.
    Let $(\bfmu,\bfdelta)$ be an optimal solution of $\rmPc^{T+1}$. We will prove that we can construct a feasible solution $(\tilde{\bfmu},\tilde{\bfdelta})$ in $\rmPc^T$ with an objective function $\tildez^T$ higher than the optimal value $\tildez_{\rmRc}^{T+1}(\bfb)$.
    Note again that the policy $\bfdelta$ does not play a role in the linear relaxation of MILP~\eqref{pb:milp:MILP_pomdp}.

    We define $(\tilde{\bfmu},\tilde{\bfdelta})$ as follows:
    \begin{align*}
        (\mutilde_{s}^t,\mutilde_{sa}^t,\mutilde_{soa}^t, \deltatilde_{a|o}^t)_{1\leq t \leq T} = (\mu_{s}^t,\mu_{sa}^t,\mu_{soa}^t,\delta_{a|o})_{1\leq t \leq T}
    \end{align*}
    By definition, $(\tilde{\bfmu},\tilde{\bfdelta})$ is a feasible solution of $\rmPc^T$.
    We write the objective value of such solution:
    \begin{align*}
        \tildez^T &= \sum_{t=0}^T \sum_{s,a} \gamma^t r(s,a) \mutilde_{sa}^t  + \gamma^{T+1} \sum_{s} v_{\rmMDP}(s) \mutilde_s^{T+1} \\
                                            &= \sum_{t=0}^T \sum_{s,a} \gamma^t r(s,a) \mutilde_{sa}^t  + \gamma^{T+1} \sum_{s} \mutilde_s^{T+1} \max_{a \in \calA} \left( r(s,a) + \gamma \sum_{s'} p(s'|s,a) v_{\rmMDP}(s')\right) \\
                                            &= \sum_{t=0}^T \sum_{s,a} \gamma^t r(s,a) \mutilde_{sa}^t  + \gamma^{T+1} \max_{\substack{(\mu_{sa})\colon \\ \sum_{a}\mu_{sa} = \mutilde_{s}^{T+1}}} \sum_{s,a} \mu_{sa}  \left( r(s,a) + \gamma \sum_{s'} p(s'|s,a) v_{\rmMDP}(s')\right) \\
    \end{align*}
    The second equality comes from the optimal Bellman equation for MDP, and, the last equality come from a rewriting of the operand $\sum_{s} \max_{a}$ as $\max_{\substack{(\delta_{a|s})_{a,s}\\ \sum_{a} \delta_{a|s}=1}} \sum_{s,a} \delta_{a|s}$, which is equivalent as using variable $\mu_{sa} = \delta_{a|s}\mu_{s}^{T+1}$.
    Since the solution $\mu_{sa}^{T+1}$ satisfies the constraints $\sum_{a}\mu_{sa}^{T+1} = \mutilde_{s}^{T+1}$, we obtain that:
    \begin{align*}
        \tildez^T &\geq \sum_{t=0}^T \sum_{s,a} \gamma^t r(s,a) \mutilde_{sa}^t  + \gamma^{T+1} \sum_{s,a} \mu_{sa}^{T+1} r(s,a) + \gamma^{T+2} \sum_{s'} \overbrace{\sum_{s,a} \mu_{sa}^{T+1} p(s'|s,a)}^{\mu_{s'}^{T+2}} v_{\rmMDP}(s') \\
                                            &= \sum_{t=0}^{T+1} \sum_{s,a} \gamma^t r(s,a) \mutilde_{sa}^t + \gamma^{T+2} \sum_{s'} v_{\rmMDP}(s') \mu_{s'}^{T+2} = \tildez_{\rmRc}^{T+1}(\bfb)
    \end{align*}
    It achieves to prove Inequality~\eqref{eq:milp:decreasing_bounds}.

    Finally, we prove that for any $T$ in $\bbZ_{+}$, $\tildez_{\rmR}^{T}(\bfb) = \tildez_{\rmR}^{0}(\bfb)$. To do so, we show that $\tildez_{\rmR}^{T}(\bfb) = \tildez_{\rmR}^{T+1}(\bfb)$ for any $T$ in $\bbZ_{+}$.
    Let $T$ in $\bbZ_{+}$.
    Theorem~\ref{theo:milp:MDP_approx_equivalence_finiteHorizon} ensures that the linear relaxation of MILP~\eqref{pb:milp:MILP_pomdp} is equivalent to the following linear formulation:
    \begin{subequations}
        \begin{alignat*}{2}
            \tildez_{\rmR}^T(\bfb) = \max_{\bfmu}  \enskip & \sum_{t=0}^T \sum_{s,a} \gamma^t r(s,a) \mu_{sa}^t + \gamma^{T+1} \sum_{s} v_{\rmMDP}(s) \mu_{s}^{T+1}  & \quad & \\
            \mathrm{s.t.} \enskip
             & \bfmu \text{ feasible  solution of Problem}~\eqref{pb:app_proof:LP_mdp}.
        \end{alignat*}
    \end{subequations}
    In the remaining of the proof, we say that $\bfmu \in \rmP^T$ when $\bfmu$ feasible solution of Problem~\eqref{pb:app_proof:LP_mdp} with finite horizon $T$.
    Then, it follows that:
    \begin{align*}
        \tildez_{\rmR}^T(\bfb) &= \max_{\bfmu \in \rmP^T} \sum_{t=0}^T \sum_{s,a} \gamma^t r(s,a) \mu_{sa}^t + \gamma^{T+1} \sum_{s} \mu_{s}^{T+1}  \max_{a} \left( r(s,a) + \gamma \sum_{s'} p(s'|s,a) v_{\rmMDP}(s') \right) \\
                         &= \max_{\bfmu \in \rmP^T} \sum_{t=0}^T \sum_{s,a} \gamma^t r(s,a) \mu_{sa}^t + \gamma^{T+1} \max_{\substack{(\mu_{sa})\colon \\ \sum_{a} \mu_{sa}=\mu_s^{T+1}}} \sum_{s,a} \mu_{sa}\left(r(s,a)  + \gamma \sum_{s'} p(s'|s,a) v_{\rmMDP}(s') \right) \\
                         &= \max_{\bfmu \in \rmP^T} \sum_{t=0}^T \sum_{s,a} \gamma^t r(s,a) \mu_{sa}^t + \gamma^{T+1} \max_{\substack{(\mu_{sa})\colon \\ \sum_{a} \mu_{sa}=\mu_s^{T+1}}} \sum_{s,a} \mu_{sa}\left(r(s,a)  + \gamma \sum_{s'} p(s'|s,a) v_{\rmMDP}(s') \right) \\
                         &= \max_{\bfmu \in \rmP^{T+1}} \sum_{t=0}^{T+1} \sum_{s,a} \gamma^t r(s,a) \mu_{sa}^t + \gamma^{T+2} \sum_{s}  v_{\rmMDP}(s) \mu_{s}^{T+2} = \tildez_{\rmR}^{T+1}(\bfb)
    \end{align*}

\end{proof}

\bibliographystyle{plainnat}
\bibliography{pomdp}

\end{document}